\newtheorem{lemma}{Lemma}[section]
\newtheorem{theorem}[lemma]{Theorem}
\newtheorem{proposition}[lemma]{Proposition}
\newtheorem{cor}[lemma]{Corollary}
\newtheorem{claim*}{Claim}
\newtheorem{defn}[lemma]{Definition}
\theoremstyle{definition}
\newtheorem{remark}[lemma]{Remark}
\newtheorem{example}[lemma]{Example}
\newcommand{\A}{{\mathbb A}}
\newcommand{\C}{{\mathbb C}}
\newcommand{\Z}{{\mathbb Z}}
\DeclareMathOperator{\Spec}{Spec}
\def\Lc{\mathcal{L}}
\def\Oc{\mathcal{O}}
\def\thetar{\vartheta}
\newcommand{\car}[2]{%
	\left[{\substack{#1\\#2}}\right]}
\numberwithin{equation}{section}
\numberwithin{table}{section}
\title{On different expressions for invariants of hyperelliptic curves of genus $3$}
\let\@wraptoccontribs\wraptoccontribs
\subjclass[2010]{11F37,	11F46,11G10,11G15,14K10,14K25,14J15,\\14L24,14H15,14H42,14H45,14Q05}
\keywords{Hyperelliptic curves, Invariants of curves, Siegel Modular forms, Theta constants, Reductions types of curves}
\begin{document}

\begin{abstract}

In this paper we give a passage formula between different invariants of genus 3 hyperelliptic curves: in particular between Tsumine and Shioda invariants. This is needed to get modular expressions for Shioda invariants, that is, for example, useful for proving the correctness of numerically computed equations of CM genus $3$ hyperelliptic curves. 

On the other hand, we also get Shioda invariants described in terms of differences of roots of the equation defining the hyperelliptic curve, that has applications for studying the reduction type of the curve. Under certain conditions on its jacobian, we give a criterion for determining the type of bad reduction of a genus $3$ hyperelliptic curve.  
\end{abstract}
% !TEX root = Riemann.tex

\author[Lorenzo]{Elisa Lorenzo Garc\'ia}
\address{%
	Elisa Lorenzo Garc\'ia
  Univ Rennes, CNRS, IRMAR - UMR 6625, F-35000
 Rennes, %
  France. %
}
\email{elisa.lorenzogarcia@univ-rennes1.fr}

%Thanks
%\thanks{The authors would like to thank ...}

%\date{\today}
%\subjclass[2010]{13A50, 14L24, 14H10, 14H25}
%\keywords{plane quartic curves; invariant theory; Dixmier--Ohno invariants;
%moduli spaces; reconstruction}

%%% Local Variables:
%%% mode: latex
%%% TeX-master: "reductiong3.tex"
%%% End:

\maketitle

\section{Introduction}\label{Sec:Intro}

Having expressions for the $j$-invariant of an elliptic curve in terms not only of its coefficients but also in terms of theta constants or differences of its roots is useful for constructing elliptic curves with CM by a given order (or with a given number of points over a finite field) or for determining the reduction type of the elliptic curve.
More precisely, let $E/K$ be an elliptic curve over a field of characteristic different from $2$ and $3$ in Short Weierstrass form $$y^2=f(x)=x^3+ax+b=(x-e_1)(x-e_2)(x-e_3).$$
%It is also a complex torus $\mathbb{C}/\Lambda$ with $\Lambda=<\tau, 1>$ for some $\tau\in\mathbb{H}$. 

Two such elliptic curves $E$ and $E'$ are isomorphic if and only if they have the same $j$-invariant, i.e., if and only if $j(E)=j(E')$, that can be rewritten as an equality in a weighted projective space $(a:b)=(a':b')\in\mathbb{P}^1_{(2,3)}$.
Recall the following expressions for the $j$-invariant:
$$
j(E)=1728\frac{4a^3}{4a^3+27b^2}=-2^8\cdot3^3\frac{(e_1e_2+e_2e_3+e_3e_1)^3}{(e_1-e_2)^2(e_2-e_3)^2(e_3-e_1)^2}.
$$
It is well-defined because the denominators of these expressions are proportional to the discriminant of the elliptic curve and hence non-zero since an elliptic curve is by definition non-singular:
$$
\Delta(E)=-16(4a^3+27b^2)=16\operatorname{disc}(f)=(e_1-e_2)^2(e_2-e_3)^2(e_3-e_1)^2.
$$
Notice that this condition is also equivalent to the polynomial $f$ having three different roots, that is, $f$ defining a double cover of $\mathbb{P}^1$ ramified in exactly $4$ points.

Assume now that $K$ is a discrete valuation field with ring of integers $\mathcal{O}_K$, uniformizer $\pi$ and valuation $v$, the elliptic curve $E:\,y^2=x^3+ax+b$ with $a,b\in\mathcal{O}_K$ has good reduction modulo $\pi$ if and only if $v(\Delta(E))=0$. More precisely, the following classical result holds:

\begin{theorem}(\cite{Tate75})
	Let $E/K:\,y^2=x^3+ax+b$ be an elliptic curve given by an integer model, i.e., $a,b\in\mathcal{O}_K$. Then:
	\begin{enumerate}
		\item $E$ has good reduction if and only if $v(\Delta(E))=0$.
		\item $E$ has multiplicative (bad) reduction if $v(\Delta(E))>0$ and $v(a)=0$.
		\item $E$ has additive (bad) reduction if $v(\Delta(E))>0$ and $v(a)>0$. After a finite field extension, additive reduction always becomes multiplicative or good.
		\item $E$ has potentially good reduction if and only if $3v(a)\geq v(\Delta(E))$.		
	\end{enumerate}
\end{theorem}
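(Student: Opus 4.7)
The strategy is to analyze the reduction $\bar E \colon y^2 = \bar f(x)$ over the residue field $\kappa := \mathcal{O}_K/\pi$, where $\bar f(x) = x^3 + \bar a x + \bar b$.

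For (1), I would invoke the identity $\Delta(E) = 16\,\disc(f)$: since $\Char \kappa \neq 2$, the reduction $\bar E$ is smooth if and only if $\bar f$ has three distinct roots in $\overline{\kappa}$, which is equivalent to $\overline{\Delta(E)} \neq 0$, i.e.\ $v(\Delta(E)) = 0$.

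For (2) and the first assertion of (3), I would do a case analysis of the singular fiber. If $v(\Delta(E)) > 0$ then $\bar f$ has a multiple root $\bar x_0$, and the singular point of $\bar E$ is $(\bar x_0, 0)$. Since $\bar f$ has no $x^2$-coefficient, its three roots sum to zero, so a triple root forces $\bar x_0 = 0$ and hence $\bar a = \bar b = 0$; otherwise $\bar f(x) = (x - \bar x_0)^2(x + 2\bar x_0)$ with $\bar x_0 \neq 0$, and the local equation at the singularity has the form $y^2 = 3\bar x_0 (x-\bar x_0)^2 + \text{higher-order terms}$, giving a node. Thus $v(a) = 0$ (equivalently $\bar a \neq 0$) rules out a triple root and produces multiplicative reduction, while $v(a) > 0$, combined with $4a^3 + 27b^2 = -\Delta(E)/16$ and $v(\Delta(E)) > 0$, forces $v(b) > 0$ (using $\Char \kappa \neq 3$), whence $\bar f = x^3$ and the reduction is cuspidal---that is, additive.

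For (4) and the remaining claim in (3), I would invoke the classical criterion that $E$ has potentially good reduction if and only if $j(E) \in \mathcal{O}_K$, a consequence of the N\'eron--Ogg--Shafarevich criterion together with the Tate uniformization. Since $\Char \kappa \neq 2, 3$, the constants $16$ and $1728$ are units, and the formula $j(E) = 1728 \cdot 4a^3/(4a^3+27b^2)$ yields
\[
v(j(E)) \;=\; 3 v(a) - v(\Delta(E)),
\]
so $v(j(E)) \geq 0 \iff 3v(a) \geq v(\Delta(E))$, which proves (4). The dichotomy in (3) follows: if $j(E) \in \mathcal{O}_K$ a finite extension yields good reduction, and otherwise the Tate curve construction produces a finite extension over which $E$ acquires (split) multiplicative reduction. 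The main obstacle is precisely this latter input, whose proof uses N\'eron models or Tate uniformization and is not elementary; the singular-fiber analysis in the other items is routine algebra on the reduced cubic $\bar f$.
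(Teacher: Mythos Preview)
The paper does not actually prove this theorem: it is stated in the introduction as a classical result with a citation to Tate's algorithm (\cite{Tate75}), and no proof is given. The paragraph immediately following the statement offers only an informal heuristic in terms of how many of the roots $e_1,e_2,e_3$ collide modulo~$\pi$, which matches in spirit your singular-fiber analysis for (1)--(3).

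Your sketch is correct and is the standard argument. A couple of small remarks: in (2)--(3) you are implicitly using that the given integral model is already minimal enough to read off the reduction type (this is fine here since $\Char\kappa\neq 2,3$ and the model is in short Weierstrass form, but it is the sort of thing Tate's algorithm makes precise in general); and in (4) you correctly identify the nontrivial input as the criterion ``potentially good $\iff j(E)\in\mathcal{O}_K$'' together with the Tate uniformization, which indeed is where the real work lies. Since the paper treats the theorem as a black box, there is nothing further to compare.
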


Intuitively, and in terms of the roots of $f$: if the $3$ roots ($4$ if we count the point at the infinity) are different then we have good reduction $(1)$. If $e_1\equiv e_2\equiv e_3\text{ mod }\pi$ we have additive (bad) reduction $(3)$ and after normalizing (maybe over a extension of $K$) we can get at least two roots with different valuation, so we fall in $(1)$ or $(2)$, and we distinguish those cases by the exact valuations of $e_i-e_j$ as in $(4)$. Finally, if $e_1\equiv e_2\nequiv e_3\text{ mod }\pi$ we have multiplicative bad reduction and nothing can be done to improve this, that is, in this case we have geometrically bad reduction.

Assume now that $K=\mathbb{C}$. Then $E$ is also a complex torus $\mathbb{C}/\Lambda$ with $\Lambda=<\tau, 1>$ for some $\tau\in\mathbb{H}$. We can then write the $j$-invariant as:
$$
j(E)=\frac{1}{q}+744+196884q+21493760q^2+864299970q^3+...
$$
where $q=\operatorname{exp}(2\pi i\tau)$, or
$$
j(E)=2^5\frac{(\thetar_1^8+\thetar_2^8+\thetar_3^8)^3}{(\thetar_1\thetar_2\thetar_3)^8},
$$
where $\thetar_1$, $\thetar_2$ and $\thetar_3$ are the three even theta constants of an elliptic curve defined as in \cite[1.3,4.21]{hayquehacer}. We call this last expression a modular expression for the $j$-invariant, since it is given as a (classical) modular function of $\mathbb{H}$ for the modular group $\operatorname{SL}_2(\mathbb{Z})/\{\pm \operatorname{Id}_2 \}$, that is, as a meromorphic function of the modular curve $X(1)$.
This modular expression of the $j$-invariant is for example specially useful for computing Hilbert Class Polynomials of imaginary quadratic orders, that is, elliptic curves with given endomorphism ring. Hence, via de complex multiplication method, elliptic curves over finite fields with a given number of points.

\subsection{Genus $2$ curves}
For genus $2$ curves over a field of characteristic different from $2$, i.e. curves given by an equation $y^2z^4=f(x,z)$ with $\operatorname{deg}(f)=6$ (we write $f(x)=f(x,1)$), we have the Igusa invariants \cite{Igusa60} for binary sextics, defining invariants for genus $2$ curves: $\operatorname{Ig}=(\operatorname{Ig}_2:\operatorname{Ig}_4:\operatorname{Ig}_6:\operatorname{Ig}_{10})$. These invariants determine isomorphism classes: two genus $2$ curves $C,C'$ are isomorphic if and only if $\operatorname{Ig}(C)=\operatorname{Ig}(C')\in\mathbb{P}^{3}_{(2,4,6,10)}$. We write $\operatorname{Ig}_i(C)$ or $\operatorname{Ig}_i(f)$ indifferently when the model of the curve $C$ is fixed and clear from the context.
These invariants can be defined in terms of the differences of roots of $f(x)$ \cite{Clebsch} or as Siegel modular forms for $\mathbb{H}_2$ with respect to the modular group $\operatorname{Sp}_4(\mathbb{Z})$, see \cite{Igusa62, Igusa67}.

The computation and implementation of Class Polynomial for quartic CM fields by using these modular expressions of the invariants are discussed in \cite{Streng14}. Since for genus greater than $1$ the coefficients of Class polynomials are rational numbers and not integers, a bound for their denominators is needed for their exact computation: \cite{BruinierYang,GorenLauter07,yang, LV15b}

The reduction type and the description of the special fiber of the stable model of a genus $2$ curve in terms of the valuation of its invariants is a beautiful result in \cite[Thm. 1]{liug2}.

\subsection{Genus $3$ curves}
Curves of genus $3$ may be non-hyperelliptic or hyperelliptic. In this paper we focus on the hyperelliptic case. For both families we have sets of projective invariants describing the isomorphism classes: Dixmier-Ohno invariants \cite{Dixmier,Ohno} and Shioda invariants \cite{Shioda}. 

In this paper we are interested in giving modular expressions for invariants of hyperelliptic genus $3$ curve. 
This is useful for solving the so-called inverse Jacobian problem: given $\tau\in\mathbb{H}_3$ a period matrix for a hyperelliptic genus $3$ curve $C$ we want to construct an equation of $C$. We can do this numerically as in \cite{BILV}. But if we want to get an exact equation (and prove its correctness) for a curve $C$ let say defined over a number field $K$, we need to have a bound for the denominators appearing in the coefficients of the computed equation, or equivalently for the denominators of a set of invariants. For these, we need a good set of invariants, that is, a set of modular invariants for which we understand the meaning of having a prime dividing the denominator.

In this paper we present such a set of invariants, where the primes in the denominators are only primes of bad reduction which are bounded for the CM case: see \cite{BCLLMNO} and \cite{KLLNOS}. A bound of the exponents for those primes is work in progress in \cite{IKLLMMV_emb}.

Recall that (the modified) Coleman's conjecture \cite[Conj. 4.1]{MoonenOort} states that the number of non-superelliptic CM jacobians of fixed genus $g$ is finite for $g\geq4$, hence the genus $3$ case is important because it is the last case where there are infinitely many generic CM curves. 

Regarding the reduction type of genus $3$ curves, some progress are made in \cite{LLLR} and \cite{BKSW}. In the first of these reference we see that for non-hyperelliptic curves not only the primes of bad reduction need to be bounded for carrying out the CM-method but also the primes of hyperelliptic reduction, which, so far, is not known how to be done. A full generalization of \cite[Thm.1]{liug2} for genus $3$ hyperelliptic curves is a challenging problem. In this paper we study a simplified version of it, see Theorem~\ref{redtype}, by adding some assumptions on the reduction type of the jacobian of the curve. Even if they are strong assumptions, they are for example satisfied by CM curves.

\subsection{Outline} In Section~\ref{Sec:ModInv} we give the precise definition of what we understand by a modular invariant, see Def.~\ref{Def:ModInv}, and we introduce the classical and algebraic theta constants we will use for writing down Siegle modular forms and to determine the type of reduction of certain genus $3$ curves.

We present the different types of genus $3$ hyperelliptic curves invariants in Section~\ref{Sec:genus3Inv}: Shioda's ones and Tsuyumine's ones. We describe the (dis)advantages of each set of generators of invariants of binary octics. Later, in Section~\ref{Sec:relation} we give a \textit{passage formula} from Tsuyumine invariants to Shioda's ones and vice versa, see Theorems~\ref{TinS} and~\ref{SinT}. 

As a consequence of this \textit{passage formula} we provide good (absolute, modular and with some control on its denominators) invariants for performing the CM-method for genus $3$ hyperelliptic curves in Section~\ref{Sec:AbsModInv} and we give a very explicit criterion for determining the reduction type of genus 3 hyperelliptic curves for which the stable reduction of its jacobian is again a principally polarized abelian variety (p.p.a.v.) of dimension $3$ (for example CM curves) in terms of the valuation of its invariants, see Theorem~\ref{redtype} in Section~\ref{Sec:RedType}.

\section{Modular invariants}\label{Sec:ModInv}
Let $A$ be a principally polarized abelian variety of dimension $g$ over the complex numbers. As a complex torus we can write it as $\mathbb{C}^g/(\Z^g+ \tau \Z^g)$ where
$$
\tau\in\mathbb{H}_g:=\{\tau\in\mathbb{C}^{g\times g}:\,\tau^t=\tau,\,\operatorname{Im}(\tau)>0 \}.
$$
A theta function is a holomorphic function of $(z,\tau)\in\mathbb{C}^g\times\mathbb{H}_g$:
$$
\thetar_m(z,\tau)=\thetar\car{m'}{m''}(z,\tau),\,\,m=\car{m'}{m''}\in\mathbb{Z}^{2g}.
$$

Where we define:
\begin{equation}
\thetar(z,\tau) = \sum_{n \in \Z^{g}}\exp(\pi i n \tau n^t + 2 \pi i n z^t), \text{ and }
\end{equation}

\begin{equation*}
\thetar[m](z,\tau) = \operatorname{exp}(\pi im'\tau m'^t+2\pi im'(z+m'')^t)\thetar(z+m''+\tau m'^t,\tau).
%\sum_{n \in \Z^{g}}\exp\,\pi i ((n+\frac{m'}{2})^t \tau (n+\frac{m'}{2}) + 2 (n+\frac{m'}{2})(z+\frac{m''}{2})).
\end{equation*}

In this context, $m$ is called a \emph{characteristic} or \emph{theta characteristic}, and the value $\thetar[m](0,\tau)$ is called a \emph{theta constant}.

Given a characteristic $m=\car{m'}{m''}\in\mathbb{Z}^{2g}$, we say that it is even if $m'\cdot m''$ is even. Otherwise we say it is odd. Following \cite[Chapter II, Proposition 3.14]{Mumford1}: 
for $m \in \Z^{2g}$,
\begin{equation*}
\thetar[m](-z,\tau) = (-1)^{m'\cdot m''} \thetar[m](z,\tau).
\end{equation*}
From this we conclude that all odd theta constants vanish. Furthermore, we have that if $n \in 2\mathbb{Z}^{2g}$, then $\thetar[m+n](z,\tau)$ is equal to $\thetar[m](z,\tau)$ or its opposite.
In other words, if $m$ is modified by a vector with even integer entries, the theta value at worst acquires a factor of~$-1$. This is why we say that they are modular forms of weight $1/2$.
More precisely, even theta constants are modular forms of weight $1/2$ with respect to the finite index subgroup $\Gamma_g(4,8)\subset\Gamma_g=\operatorname{Sp}(2g,\mathbb{Z})$. Hence, isomorphic p.p.a.v. have, up to multiple, equal values for $\thetar[m](0,\tau)^2$.

An elliptic curve has $3$ even (and non-zero) theta constants, an indecomposable p.p.a. surface has $10$ even (and non-zero) theta constants and p.p.a. threefolds have $36$ even theta constants that are all non-zero in the non-hyperelliptic jacobian case and exactly one equal to zero in the hyperelliptic case \cite[Lem. 11]{Igusa67}.

We are now going to define theta constants in a more general setting.

\subsection{Algebraic Theta constants}\label{SubSec:algthetas}
Let $R$ be a ring and $S=\Spec R$. Let $X/R$  be an abelian scheme and $\Lc$ be a relatively ample line bundle on
$X$ such that $(-1_X)^* \Lc \simeq \Lc$.  Fix an isomorphism $\epsilon: 0^* \Lc \simeq \Oc_S$ where
$0 : S \to X$ is the zero section. To any $s \in \Gamma(X,\Lc)$, Mumford associates (see \cite[Appendix
I]{Mumford2}) a morphism $\thetar_s : X[2] \to \A^1_S$. 

Following
\cite[Prop.~5.11]{Mumford2} (see also loc. cit. Definition.~5.8), in the special case where $S=\Spec \C$ and
$\Lc$ is the basic line bundle  on $X_{\tau}=\C^g/(\Z^g+ \tau \Z^g)$ (see
loc. cit. p.~36), then $s$ is uniquely defined up to a multiplicative constant  and there is a unique choice of $\epsilon$ such that
\begin{equation} \label{eq:theta}
\thetar_s(x) = e^{-i \pi x_1.x_2} \cdot \thetar \car{x_1}{x_2}(\tau)
\end{equation} 
for any $x \in X[2] \simeq (\frac{1}{2} \Z/\Z)^{2g} \ni (x_1,x_2)$ (after a
specific isomorphism of the $2$-torsion) where $\thetar \car{x_1}{x_2}(\tau)$
is the value at $0$ of the classical theta function with characteristic
$\car{x_1}{x_2}$ \cite[p.192]{Mumford1}.

Let $C/K$ be a genus $3$ curve over a complete discrete valuation field of characteristic different from $2$. Let $(\thetar_i)_{i=1}^{36}$ be its integral algebraic theta constants, i.e., they are integral and the minimum of their valuations is equal to $0$ (this may be attained after multiplying all of the by a common factor). Then:

\begin{enumerate}
	\item $C$ is non-hyperelliptic if and only if all of them are non-zero.
	\item $C$ is a hyperelliptic curve if and only if exactly one of them equal to zero.
	\item $C$ has potentially good non-hyperellitic reduction if and only if all of them have zero valuation (\cite[Thm. 1.6]{LLR}).
	\item $C$ has (pot. good) hyperelliptic reduction if and only if exactly one of them has positive valuation: (1) + same argument than in \cite[Thm. 1.6]{LLR}.
	\item $C$ has geometrically bad reduction if and only if more than one has positive valuation: this is a consequence of (3) and (4).
\end{enumerate}

Statements (1) and (2) are classical, but we refer to the paragraph before the proof of Theorem $1.6$ in \cite{LLR} for a general reference for fields of characteristic different from $2$.

\subsection{Siegel modular forms and curves invariants} Here we follow \cite{LRZ, IKLLMMV}.
Let $X_0^d$ be the non-singular locus of $X_d$: degree $d$ ternary (or binary) forms and let $Y_d$ be the universal curve over the affine space $X_d=\operatorname{Sym}^d(E)$ where $E$ is a vector space of dimension $3$ (or $2$) and $G=\operatorname{GL}(E)$. 

There exists a morphism to the moduli space of genus $g$ curves $p:\,X_0^d\rightarrow M_g$ where $g=(d-1)(d-2)/2$ (or $\lfloor\frac{d-1}{2}\rfloor$). Assume $g>1$. Let $\pi:\,C_{g}\rightarrow M_{g}$ be the universal curve, and let $\lambda$ be the invertible sheaf associated to the Hodge bundle, namely $\lambda=\wedge^g\pi_*\Omega^1_{C_{g}/M_{g}}$. This induces another morphism: $p^*:\Gamma(M_g^0,\lambda^{\otimes h})\rightarrow\Gamma(X_d^0,\alpha^{\otimes h})$: where $\alpha=\wedge^g\pi_*\Omega^1_{Y_d^0/X_d^0}$. Let $\theta:\,M_g\rightarrow A_g$ be the Torelli map. So we have $\theta^*\alpha=\lambda$.

Let $\eta_i$ are a precise basis of $\Omega^1(C_F)=\operatorname{H}^0(C_F,\Omega^1)$ for $F\in X_d^0$ and $\eta=\eta_1\wedge...\wedge\eta_g$. The linear map $\operatorname{Inv}_{gh}(X_d^0)(\text{or }\operatorname{Inv}_{\frac{gh}{2}}(X_d^0))\xrightarrow{\sim}\Gamma(X_d^0,\alpha^{\otimes h}):\,\Phi\mapsto\tau(\Phi)=\Phi\cdot\eta^{\otimes h}$ is an isomorphism.
 Let $\sigma=\tau^{-1}\circ p^*$ be a homomorphism $\Gamma(M_g^0,\lambda^{\otimes h})\rightarrow \operatorname{Inv}_{gh}(X_d^0)$ (or $\operatorname{Inv}_{\frac{gh}{2}}(X_d^0)$) such that $\sigma(f)(F)=f(C_F,(p^*)^{-1}\eta)$. 

Let $F\in X_d^0$ and let $\eta_1,...,\eta_g$ be the basis of regular differential on $C_F$ defined before. Let $\gamma_1,...,\gamma_{2g}$ be a symplectic basis of $\operatorname{H}_1(C,\mathbb{Z})$. The matrix
$$
\Omega=[\Omega_1,\Omega_2]=\begin{pmatrix}
\int_{\gamma_1}\eta_1 & ... & \int_{\gamma_{2g}}\eta_1\\
\vdots& & \vdots\\
\int_{\gamma_1}\eta_g& ... & \int_{\gamma_{2g}}\eta_g
\end{pmatrix}
$$
belongs to the set of Riemann matrices and $\tau=\Omega_2^{-1}\Omega_1\in\mathbb{H}_g$.

\begin{theorem}\label{ThmModular}(\cite[Cor. 3.3.2]{LRZ} and \cite[Cor. 3.6]{IKLLMMV})
	Let $f\in\operatorname{S}_{g,h}(\mathbb{C})$ be a geometric Siegel modular form, $\tilde{f}\in\operatorname{R}_{g,h}(\mathbb{C})$ the corresponding analytic modular form, and $\Phi=\sigma(\theta^*f)$ the corresponding invariant. In the above notation,
	$$
	\Phi(F)=(2i\pi)^{gh}\frac{\tilde{f}}{\operatorname{det}\Omega_2^h}.
	$$ 
\end{theorem}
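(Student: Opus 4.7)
The plan is to unwind the definitions of $\sigma$, $\tau$, $p^*$ and $\theta^*$ and then invoke the standard dictionary between geometric (algebraic) and analytic Siegel modular forms. By construction
$$
\Phi = \sigma(\theta^*f) = \tau^{-1}(p^*\theta^*f),
$$
so $p^*\theta^*f = \tau(\Phi)$, and evaluating at $F \in X_d^0$ gives the identity
$$
(p^*\theta^*f)(F) = \Phi(F) \cdot \eta^{\otimes h}
$$
as elements of the fibre $\alpha^{\otimes h}|_F = \bigl(\wedge^g H^0(C_F, \Omega^1)\bigr)^{\otimes h}$. Under the Torelli morphism this same fibre is identified with $\lambda^{\otimes h}|_{J(C_F)}$, so that the left-hand side equals $f(J(C_F))$ viewed as an element of $\bigl(\wedge^g H^0(J(C_F),\Omega^1)\bigr)^{\otimes h}$ expanded in the basis $\eta^{\otimes h}$. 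Hence the theorem reduces to computing the coordinate of $f(J(C_F))$ in that basis.

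Next I would spell out how this coordinate relates to the analytic form $\tilde f$. Write the period matrix of $(J(C_F),\eta_1,\dots,\eta_g)$ as $[\Omega_1,\Omega_2]$. The renormalized basis $\omega_i = \sum_j (\Omega_2^{-1})_{ji}\,\eta_j$ has period matrix $[\tau,\operatorname{Id}_g]$ with $\tau = \Omega_2^{-1}\Omega_1 \in \mathbb{H}_g$, which presents $J(C_F)$ as the standard complex torus $\C^g/(\Z^g+\tau\Z^g)$. By the very definition of the analytic modular form attached to $f$ (see the correspondence $\operatorname{S}_{g,h}(\C) \leftrightarrow \operatorname{R}_{g,h}(\C)$ recalled in \cite[Sec.~3]{LRZ}, \cite[Sec.~3]{IKLLMMV}), the value of $f$ on the standard torus with basis $2i\pi\cdot dz_1,\dots,2i\pi\cdot dz_g$ equals $\tilde f(\tau)$, the factor $2i\pi$ being the comparison between algebraic and analytic differentials on an abelian variety arising from the exponential uniformization. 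A wedge computation now gives
$$
\eta_1 \wedge \cdots \wedge \eta_g = \det(\Omega_2)\cdot \omega_1 \wedge \cdots \wedge \omega_g = \frac{\det(\Omega_2)}{(2i\pi)^g}\,(2i\pi\,dz_1) \wedge \cdots \wedge (2i\pi\,dz_g).
$$

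Raising to the $h$-th tensor power and using that $f$ is homogeneous of weight $h$ in the choice of basis of $\wedge^g H^0(\Omega^1)$, we obtain
$$
f(J(C_F))\;=\;\Bigl(\frac{(2i\pi)^g}{\det(\Omega_2)}\Bigr)^{h}\tilde f(\tau)\cdot \eta^{\otimes h}\;=\;(2i\pi)^{gh}\,\frac{\tilde f(\tau)}{\det(\Omega_2)^h}\,\eta^{\otimes h}.
$$
Comparing with $(p^*\theta^*f)(F) = \Phi(F)\cdot\eta^{\otimes h}$ from the first paragraph yields the claimed formula.

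The bookkeeping in the first paragraph is formal and follows from the construction of $\sigma$, $\tau$, and the Torelli map; the only genuinely delicate point is the weight/normalization statement used in the middle paragraph, i.e.\ justifying the factor $(2i\pi)^g$ that intertwines the algebraic differentials $\omega_i$ and the analytic $dz_i$. This is the step where one has to be careful with the chosen convention for $\tilde f$; once fixed, the result follows. I would therefore present the argument as an unpacking of \cite[Cor.~3.3.2]{LRZ} (equivalently \cite[Cor.~3.6]{IKLLMMV}), with the three displayed identities above as the main content.
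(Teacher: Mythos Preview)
The paper does not supply its own proof of this theorem: it is stated as a quotation of \cite[Cor.~3.3.2]{LRZ} and \cite[Cor.~3.6]{IKLLMMV}, so there is no in-paper argument to compare against. Your proposal is correct and is precisely the standard unwinding used in those references: identify $\Phi(F)\cdot\eta^{\otimes h}$ with the value of the geometric modular form on $(J(C_F),\eta)$, change basis from $\eta$ to the normalized $\omega$ (contributing $\det\Omega_2$), and then from $\omega$ to $2i\pi\,dz$ (contributing $(2i\pi)^g$), with the weight-$h$ homogeneity producing the exponent $h$ on each factor. Nothing is missing; the only caveat you already flag, namely that the factor $(2i\pi)^g$ depends on the chosen convention for the correspondence $\operatorname{S}_{g,h}(\C)\leftrightarrow\operatorname{R}_{g,h}(\C)$, is exactly the point emphasized in \cite{LRZ,IKLLMMV}.
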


\begin{defn}\label{Def:ModInv}
	An invariants $\Phi\in\operatorname{Inv}_{gh}(X_d^0)$ (or $\operatorname{Inv}_{\frac{gh}{2}}(X_d^0)$) in the image of $\sigma\circ\theta^*$ is called a modular invariant. In this situation, a modular expression for $\Phi$ is a Siegel modular form $\tilde{f}$ such that the equality in Theorem \ref{ThmModular} holds.
\end{defn}

Modular expressions for Igusa (genus $2$ curves) invariants are in \cite[pag. 848]{Igusa67}. Also in loc. cite, Igusa introduces a map  $\rho$ from Siegel modular forms of genus $g$ to invariants of binary forms of degree $2g+2$. Tsuyumine makes it explicit for $g=3$ in \cite{Tsuyumine2}.

\subsection{Absolute invariants}
As we have already discussed, while we can associated the values of its invariants to a curve, the isomorphism class is determined by those values in a projective space, that is, by these values up to a constant to some power. In order to avoid this ambiguity, we work with what we call absolute invariants.

\begin{defn} A quotient of same degree invariants is called an absolute invariant.
\end{defn}

For example, the isomorphism classes of genus $2$ hyperelliptic curves $C$ with $\operatorname{Ig}_2(C)\neq 0$ are determined by the values of the absolute invariants:
$$
\frac{\operatorname{Ig}_{4}}{\operatorname{Ig}_{2}^2},\frac{\operatorname{Ig}_6}{\operatorname{Ig}_{2}^3},\text{ and }\frac{\operatorname{Ig}_{10}}{\operatorname{Ig}_{2}^5}.
$$

In \cite[Thm. B, p. 631]{Maeda}, Maeda gives $6$ algebraically independent absolute invariants which generate the field of absolute invariants for binary octics. Unfortunately their degrees are too large for practical computations.

\section{Genus 3 hyperelliptic curves invariants}\label{Sec:genus3Inv}

%For a long time we were looking for modular Shioda invarints, and the nswer was almost in Tsuyumine!
Let $C/K$ be a genus $3$ hyperelliptic curve defined over a field of characteristic different from $2$, then (maybe after a quadratic extension) it is given by an equation $C:\,y^2z^6=f(x,z)$ where $f$ is a degree $8$ homogeneous polynomial. When it is clear from the context and no confusion may appears, we also write $f$ for the univariate polynomial $f(x)=f(x,1)$. 
%After a suitable change of coordiantes, we may assume the coefficient of $x^8$ is equal to $1$ and $f(x,1)=\prod_{i=1}^{8}(x-\alpha_i)$ for some $\alpha_i\in \bar{K}$. 

We refer to \cite{LerRit} for an introduction about hyperelliptic curves (or binary forms) invariants.

\subsection{Shioda Invariants}

In \cite{Shioda} Shioda gives $9$ invariants that generate the algebra of invariants of binary octic forms. The first $6$ invariants are algebraically independent, while the last $3$ are related to the first ones by $5$ explicit relations. The discriminant $D$ of binary octics is an invariant of degree $14$ not contained in this set of invariants given by Shioda but that can be written in terms of them.  Shioda invariants are defined in terms of transvectans: given a binary octic $f$, he defines the covariants
$$
g=(f,f)^4,\, k=(f,f)^6,\, m=(f,k)^4,\,h=(k,k)^2,\,n=(f,h)^4,\,p=(g,k)^4,\,q=(g,h)^4
$$
and the invariants
$$
J_2=(f,f)^8,\,J_3=(f,g)^8,\,J_4=(k,k)^4,\,J_5=k\cdot m,\,J_6=(k,h)^4,$$
$$J_7=m\cdot h,\, J_8=h\cdot p,\,J_9=h\cdot n,\,J_{10}=h\cdot q.
$$
The relations for the last $3$ invariants are of the form:
\begin{eqnarray}
J_8^2+A_6J_{10}+A_7J_9+A_8J_8+A_{16}=0,\label{eq:relations1}\\
J_8J_9+A_7J_{10}+B_8J_9+B_9J_8+B_{17}=0,\label{eq:relations2}\\
J_8J_{10}+\gamma J_9^2+C_8J_{10}+C_9J_9+C_{10}J_8+C_{18}=0,\label{eq:relations3}\\
J_9J_{10}+D_9J_{10}+D_{10}J_9+D_{11}J_8+D_{19}=0,\label{eq:relations4}\\
J_{10}^2+\epsilon J_2J_9^2+E_{10}J_{10}+E_{11}J_9+E_{12}J_8+E_{20}=0.\label{eq:relations5}
\end{eqnarray}
The exact values of the constants are found in \cite[p. 1033]{Shioda}.

The expressions of these invariants in terms of the coefficients of the curve are huge and non-practical for computational purposes. However, the transvectant construction is very efficient. Until the present paper no expression of Shidoa invariants in terms of theta constants or differences of roots of $f$ were known. 

\subsection{Igusa map}\label{sec:IgusaMap}

In \cite{Igusa67} Igusa shows the existence of an exact sequence:
$$
1\rightarrow\chi_{18}A(\Gamma_3)\rightarrow A(\Gamma_3)\xrightarrow{\rho}S(2,8),
$$
where $A(\Gamma_3)$ is the graded ring of modular forms for $\mathbb{H}_3$ with resect to the modular group $\Gamma_3$, $S(2,8)$ is the graded ring of invariants of binary forms of degree $8$ and $$\chi_{18}=\prod_{1}^{36}\thetar_i,$$ where the product runs over the $36$ even theta constants of a genus $3$ curve, is the cusp form of weight $18$ defining the closure of the set of hyperelliptic points \cite{Tsuyumine2}. The invariants in the image of $\rho$ have degree degree divisible by $3$.
The map $\rho$ is unique up to units. Another construction of this map is in \cite{IKLLMMV}.

Tsuyumine proves the following Theorem:

\begin{theorem}(\cite[Chap. 20]{Tsuyumine2})
	The graded ring $A(\Gamma_3)$ of Siegel modular forms of degree $3$ is generated by the following $34$ modular forms: $\alpha_4$, $\alpha_6$, $\alpha_{10}$, $\alpha_{12}$, $\alpha'_{12}$, $\alpha_{16}$, $\alpha_{18}$, $\alpha_{20}$, $\alpha_{24}$, $\alpha_{30}$, $\beta_{14}$, $\beta_{16}$, $\beta_{22}$, $\beta'_{22}$, $\beta_{26}$, $\beta_{28}$, $\beta_{32}$, $\beta_{34}$, $\gamma_{20}$, $\gamma_{24}$, $\gamma_{26}$, $\gamma_{32}$, $\gamma'_{32}$, $\gamma_{36}$, $\gamma_{38}$, $\gamma'_{38}$, $\gamma_{42}$, $\gamma_{44}$, $\delta_{30}$, $\delta_{36}$, $\delta_{46}$, $\delta_{48}$, $\chi_{18}$ and $\chi_{28}$.
\end{theorem}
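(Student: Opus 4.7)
The plan is to use Igusa's exact sequence
$$1\to\chi_{18}A(\Gamma_3)\to A(\Gamma_3)\xrightarrow{\rho}S(2,8)$$
together with Shioda's description of $S(2,8)$ as the ring generated by the nine invariants $J_2,\ldots,J_{10}$ subject to the five syzygies \eqref{eq:relations1}--\eqref{eq:relations5}. Write $R\subset A(\Gamma_3)$ for the graded subring generated by the listed $34$ modular forms. The goal is to show $R=A(\Gamma_3)$, and I would proceed by strong induction on the weight $w$ after verifying the statement in the low-weight range where $\dim A(\Gamma_3)_w$ is already well-understood from dimension formulas.

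For the inductive step, take $f\in A(\Gamma_3)_w$ and consider $\rho(f)\in S(2,8)$. The first key task is to exhibit, for each Shioda generator $J_i$, an element $F_i\in R$ with $\rho(F_i)$ a nonzero scalar multiple of $J_i$; one identifies the $\alpha_k$ as the pullbacks of the even Eisenstein-type invariants and matches $\beta$'s, $\gamma$'s, $\delta$'s against the remaining transvectant invariants, with weight matching forcing the correspondence up to a constant. Using these lifts, write $\rho(f)=P(J_2,\ldots,J_{10})$, form $\widetilde{P}(F_2,\ldots,F_{10})\in R$, and consider $f-\widetilde{P}\in\ker\rho$. By exactness this equals $\chi_{18}\cdot g$ with $g\in A(\Gamma_3)_{w-18}$, and the induction hypothesis gives $g\in R$, whence $f\in R$. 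One must check that the lifts $F_i$ can be chosen so that the Shioda relations \eqref{eq:relations1}--\eqref{eq:relations5} hold in $R$ modulo $\chi_{18}$, which is automatic since $\rho$ is a ring homomorphism and the relations hold in $S(2,8)$.

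The main obstacle is not the inductive mechanism but rather proving that precisely these $34$ forms suffice, and in particular that no further generator is needed beyond $\chi_{18}$ and $\chi_{28}$. For this I would compare the Hilbert--Poincar\'e series of $A(\Gamma_3)$, which can be computed from the Selberg trace formula and the known dimensions of spaces of Siegel modular forms of degree $3$, with the Hilbert series of the graded ring presented by the $34$ generators together with the relations deduced above. Matching the two series weight-by-weight, combined with the surjection argument, yields equality. The subtle point, and the reason the second cusp form $\chi_{28}$ must appear among the generators, is that while $\chi_{18}$ cuts out the hyperelliptic locus, products of $\chi_{18}$ with forms in the image of $\rho$ cannot account for the full cuspidal part supported on this locus in weight $28$; a direct dimension check in weight $28$ shows that an independent cusp form is needed, and a compatible choice is $\chi_{28}$ as constructed by Tsuyumine from theta series vanishing along the hyperelliptic divisor.
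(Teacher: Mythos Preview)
The paper does not prove this theorem; it is quoted from Tsuyumine's work (the citation \cite[Chap.~20]{Tsuyumine2}) and used as input for the rest of the paper. So there is no proof in the paper to compare your proposal against.

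That said, your outline contains a genuine gap that would block the argument even if one tried to carry it out. You write that for each Shioda generator $J_i$ you will exhibit $F_i\in R$ with $\rho(F_i)$ a nonzero scalar multiple of $J_i$. This is impossible: as the paper notes just after introducing the Igusa map, the image of $\rho$ consists only of invariants of degree divisible by~$3$, and the weight--degree relation is $\deg\rho(f)=\tfrac{3}{2}\,\mathrm{wt}(f)$. Thus $J_2,J_4,J_5,J_7,J_8,J_{10}$ are not individually in the image of $\rho$ and cannot be lifted on their own. What the paper's Corollary~\ref{ModT} records is that only suitable products such as $I_kD$, $I_kD^2$, $I_kD^3$ (equivalently the $J_kD^k$) lie in the image, with the power of $D$ depending on $k$. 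Consequently your formation of $\widetilde{P}(F_2,\ldots,F_{10})$ does not make sense as written, and the inductive step collapses: one cannot subtract off an element of $R$ hitting an arbitrary $P(J_2,\ldots,J_{10})$ under $\rho$.

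Tsuyumine's actual argument does use the exact sequence and the structure of $S(2,8)$, but the bookkeeping is substantially more delicate precisely because of this degree constraint: one must work with a filtration by powers of $D$ on the image side and produce lifts for an appropriate set of degree-$3k$ invariants (this is why the list of $34$ generators contains forms like $\alpha_{12}$, $\beta_{14}$, $\gamma_{20}$, $\delta_{30}$ whose images under $\rho$ are $I_4D$, $I_7D$, $I_2D^2$, $I_3D^3$ rather than $I_4,I_7,I_2,I_3$ themselves). Your sketch would need to be reorganised around this filtration before the induction could go through.
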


Moreover, he gives the explicit description of these modular forms in term of the values of the theta constants and the values of $\rho$ evaluated on them in terms of some binary octics invariants given as combinations of differences of roots of a binary octic. 

\subsection{Tsuyumine invariants}

Let $f(x,z)\in K[x,z]$ be a binary octic. Let $\alpha_i,\beta_i\in\bar{K}$ such that $f(x,z)=\prod_{i=1}^{8}(\beta_ix-\alpha_i z)$. We introduce the following notations\footnote{Be careful, they are not exactly the same ones than in \cite{Tsuyumine2}}:
$$
(i_1...i_r)=\prod_{\begin{array}{c}i<j\\i,j\in\{i_1,...,i_r\}\end{array}}(\beta_j\alpha_i-\beta_i\alpha_j),
$$
$$
(i_1...i_r,j_1...j_s)=\prod_{\begin{array}{c}i\in\{i_1,...,i_r\}\\j\in\{j_1,...,j_s\}\end{array}}(\beta_j\alpha_i-\beta_i\alpha_j).
$$
With this notation, the discriminant of $f$ is given by the expression $D=\prod_{i<j}(ij)^2$. 

\begin{proposition}\label{propTsuyu}(\cite[Chap. 5]{Tsuyumine2})
	The graded ring $S(2,8)$ of invariants of binary octics is generated by $I_2$, $I_3$, $I_4$, $I_5$, $I_6$, $I_7$, $I_8$, $I_9$, $I_{10}$, where $I_k$ is an invariant of degree $k$ given as follows:
	\begin{align*}
	I_2&=\sum(12,34)(56,78)
	%=\sum (13)(14)(23)(24)(57)(58)(67)(68)
	\\ 
	I_3&=\sum (12)^2(34)^2(56)^2(78)^2(13)(24)(57)(68)\\
	I_4&=\sum (12)^4(345)^2(678)^2\\
	I_5&=\sum (12)^4(345)^2(678)^2(15)(26)(37)(48)\\
	I_6&=\sum (1234)^2(5678)^2\\
	I_7&=\sum (1234)^2(5678)^2(15)(26)(37)(48)\\
	I_8&=\sum (1234)^2(5678)^2(12,56)(34,78)\\
	I_9&=\sum (1234)^2(5678)^2(1,567)(2,678)(3,578)(4,568)\\
	I_{10}&=\sum (1234)^2(5678)^2(15)^2(26)^2(37)^2(48)^2(14,67)(23,58)
	\end{align*}
\end{proposition}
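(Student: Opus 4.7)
My plan is to split the proof into two parts: (a) verify that each $I_k$ defines a well-formed element of $S(2,8)_k$, and (b) show that the nine $I_k$ generate the ring $S(2,8)$.

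For (a), I would first fix the convention that each ``$\sum$'' denotes the orbit sum under the natural action of $\mathrm{S}_8$ on the eight root indices (modulo the stabilizer of the written summand, with signs chosen consistently with the antisymmetry $(ij)=-(ji)$). The building blocks $(ij) = \beta_j\alpha_i - \beta_i\alpha_j$ are the classical $\mathrm{SL}_2$-brackets, hence $\mathrm{SL}_2$-invariant, so every summand is $\mathrm{SL}_2$-invariant, and after symmetrization becomes symmetric in the pairs $(\alpha_i,\beta_i)$. A symmetric polynomial in these pairs descends to a polynomial in the coefficients of $f$. Counting the bracket-factors meeting each fixed index shows that each $(\alpha_i,\beta_i)$ appears with total weight $k$, placing $I_k$ in $S(2,8)_k$.

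For (b), I would invoke Shioda's theorem (recalled above) that $S(2,8)$ is generated by $J_2,\ldots,J_{10}$, with $J_2,\ldots,J_7$ algebraically independent and with $J_8, J_9, J_{10}$ tied to these by the five quadratic syzygies \eqref{eq:relations1}--\eqref{eq:relations5}. It then suffices to exhibit a triangular change of basis from $\{I_k\}$ to $\{J_k\}$ with invertible diagonal block, degree by degree. Concretely, I would compute both families on a sufficiently rich test family of octics (for example $f_\lambda(x,z) = \prod_{i=1}^{8}(x-\lambda_i z)$ with generic $\lambda_i$, or on Mumford-type normal forms of hyperelliptic curves), check that the six-dimensional map $(I_2,\ldots,I_7) \mapsto (J_2,\ldots,J_7)$ has nonvanishing Jacobian at a generic point, and then verify that the induced $3\times 3$ matrix expressing $(I_8,I_9,I_{10})$ in terms of $(J_8,J_9,J_{10})$ modulo the sub-algebra generated by the $I_j$ with $j\leq 7$ is invertible.

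The main obstacle will be this last step in degrees $8$, $9$, $10$, since there Shioda's five syzygies enter and one must show that they translate faithfully into analogous relations among the $I_k$ in the correct form. I expect this step to require a computer algebra system, because the $J_k$ in degree $\geq 6$ are already unwieldy as polynomials in the coefficients of $f$, and the symmetrizations defining $I_8$, $I_9$, $I_{10}$ run over $\mathrm{S}_8$-orbits of substantial size. Once the comparison is finished, however, the proposition follows formally: the $I_k$ span the same graded components as the $J_k$ in every degree, and hence generate $S(2,8)$.
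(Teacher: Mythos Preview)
The paper does not supply its own proof of this proposition; it is quoted as a result of Tsuyumine. So there is no argument in the paper to compare against directly. That said, your strategy is essentially correct, and the key computation you propose is precisely what the paper carries out later for a different purpose: Theorem~\ref{TinS} expresses each $I_k$ as a polynomial in $J_2,\ldots,J_k$ with the coefficient of $J_k$ nonzero, and Theorem~\ref{SinT} inverts this. Combined with Shioda's theorem, this already gives the generation statement. Note that the situation is simpler than you describe: because the change of basis is lower-triangular with respect to degree, you do not need a Jacobian argument or a separate $3\times 3$ block for $(I_8,I_9,I_{10})$ modulo the lower-degree subalgebra; it suffices that each diagonal coefficient (the coefficient of $J_k$ in $I_k$) is nonzero, which the explicit formulas in Theorem~\ref{TinS} confirm.

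Two small points on part~(a). First, your weight count must yield that each index occurs with total multiplicity $k$ in every summand of $I_k$; this is routine but should be checked case by case (e.g.\ in $I_9$ the asymmetric pattern $(1,567)(2,678)(3,578)(4,568)$ still gives each of $1,\ldots,8$ total weight $9$). Second, you should also verify that the symmetrized sums are not identically zero, since bracket monomials can cancel under $S_8$-symmetrization; this nonvanishing is implicit in the interpolation computations of Theorem~\ref{TinS}, which evaluate the $I_k$ on explicit split octics and obtain nonzero values.
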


Even if having the same degrees, these invariants are not Shioda invariants \cite{Shioda}. The first seven ones are algebraicaly independent but not the last three, Tsuyumine does not compute the algebraic relations between them.

Tsuyumine invariants enjoys both nice properties we where looking for: a description in terms of diferences of roots of $f$ and a modular expression. However, they are not easily computable, since even if one factors the polynomial $f$ to get its roots, one still needs to go through all the permutation in $S_8$. Their expressions in terms of the coefficients of $f$ are huge. What we do in Proposition~\ref{TinS} is giving them in terms of Shioda invariants who are easily computable. Another nice property about Shioda invariants is that the reconstruction of the curve from them is known and implemented \cite[Sec. 2.3]{LerRit}. We believe both invariants have nice properties, we do not need to decide which ones to use, what we need is a \textit{passage formula} between them.

\begin{remark}
	Tsuyumine invariants are integral invariants, since they are polynomials with integer coefficients on the symmetric functions of the roots of the binary octic, that is, on their coefficients. However, Shioda invariants are not integer invariants, they have some denominators with powers of $2,3,5$ and $7$, see \cite[Section 1.5]{LerRit}. These are precisely the primes for which Shioda invariants are not an HSOP. In his thesis \cite{basson}, gives HSOP's for the characteristics $p=3$ and $7$ and the same techniques may be used to get un HSOP for $p=5$. 
\end{remark}

Following \cite{Tsuyumine2} we have: 

\begin{theorem}(\cite[Sections 23,24,25,26]{Tsuyumine2}) The image of the Siegel modular form $\alpha_4,\,\alpha_6,\,\alpha_{12},\,\beta_{14},\,\beta_{16},\,\beta_{22},\,\gamma_{20},\,\gamma_{24},\,\delta_{30},\,\chi_{18}$ and $\chi_{28}$ by the Igusa $\rho$ map is given by the following combinations of Tsuyumine's invariants:
\begin{align*}
\rho(\alpha_4)&=2^{-3}I_6,
&\rho(\alpha_6)=I_9
\\
\rho(\alpha_{12})&=2^{-3}3^{-2}I_4D,
&\rho(\beta_{14})=I_7D
\end{align*}
\begin{align*}
\rho(\beta_{16})&=I_{10}D,
&\rho(\beta_{22})=I_5D^2\\
\rho(\gamma_{20})&=I_2D^2,
&\rho(\gamma_{24})=I_8D^2\\
\rho(\delta_{30})&=I_3D^3,
&\rho(\chi_{18})=0\\
\rho(\chi_{28})&=D^3.&
\end{align*}
\end{theorem}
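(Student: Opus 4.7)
The overall strategy is to take the explicit theta-series expression of each Siegel modular form on the left and rewrite it, on the hyperelliptic locus, as a polynomial in the bracket symbols $(ij)=\beta_j\alpha_i-\beta_i\alpha_j$, then match the result against the Tsuyumine invariants $I_k$ of Proposition~\ref{propTsuyu}. The bridge between the two worlds is Thomae's formula: for a genus~$3$ hyperelliptic curve $y^2=f(x)$ with roots $\alpha_i/\beta_i$ ($i=1,\ldots,8$), the $35$ non-vanishing even theta constants $\thetar[m](0,\tau)^2$ are, after a uniform normalization, equal to products $\pm\prod_{i<j,\,i,j\in S}(ij)\cdot\prod_{i<j,\,i,j\in T}(ij)$ indexed by the partitions $\{1,\ldots,8\}=S\sqcup T$ into two subsets of size~$4$; the $36$th even characteristic (the hyperelliptic one) produces $\chi_{18}=\prod_{i=1}^{36}\thetar_i=0$, which already gives $\rho(\chi_{18})=0$ on the nose.

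The plan is then to run, generator by generator, through the list of eleven modular forms. First I would write $\alpha_4,\alpha_6,\alpha_{12},\alpha_{12}',\beta_{14},\beta_{16},\ldots$ as the concrete symmetric polynomials in the $35$ non-vanishing theta constants given in Tsuyumine's Chapter~20. Substituting Thomae, each such polynomial becomes a polynomial in the $(ij)$ that is manifestly $S_8$-invariant (the labelling of the roots is not intrinsic to the curve). The definition of the Tsuyumine invariants is precisely that they are the natural $S_8$-symmetrizations of monomials in the $(i_1\ldots i_r)$ and $(i_1\ldots i_r,j_1\ldots j_s)$. So the combinatorics reduces to identifying which orbit-sum one has produced; whenever the weight of the modular form exceeds the corresponding degree of the binary-form invariant one expects, one factors out a copy of $D=\prod_{i<j}(ij)^2$, which itself comes from the Thomae denominator on the hyperelliptic locus. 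The scalar prefactors $2^{-3}$, $2^{-3}3^{-2}$, etc., come from bookkeeping the multiplicity with which equivalent partitions are counted (for $\alpha_4$, the $35$ theta constants regroup as $35/\binom{8}{4}\cdot 2 = \ldots$ type accounting) together with the normalization constant in front of Thomae.

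Concretely, the order in which I would proceed is: (i) $\alpha_4$, the lowest-weight generator, to fix the Thomae normalization; the identity $\rho(\alpha_4)=2^{-3}I_6$ then pins down all multiplicative ambiguities because $I_6=\sum(1234)^2(5678)^2$ is exactly the symmetrization that appears from $\sum_m\thetar[m]^8$ on the hyperelliptic locus. (ii) $\chi_{28}=D^3$ and $\chi_{18}=0$, both of which are immediate from Thomae, giving a second check on the normalization. (iii) The remaining nine forms one by one, where in each case the additional factor of $D$ (or $D^2$, $D^3$) reflects how many extra brackets $(ij)$ are multiplied in compared to the minimal $S_8$-orbit; matching the interior structure of the orbit then forces the claimed $I_k$. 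For the weight-$12$ form $\alpha_{12}$, for instance, one gets $I_4\cdot D$ (both of degree $18$ in the roots), with the denominator $2^3 3^2$ tracked through Thomae.

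The main obstacle is entirely combinatorial: after Thomae the expressions blow up into sums over partitions of $\{1,\ldots,8\}$, and one must carefully keep track of signs (which are controlled by the parity of the characteristic and by the chosen ordering of the partition) and of the orbit multiplicities under $S_8$ in order to recognize the result as an $I_k$ or as $I_k\cdot D^j$. The conceptual content is light, but checking each of the eleven identities requires a careful expansion; in practice one does this once symbolically (the computation is essentially Tsuyumine's original argument in \cite[Chapters 23--26]{Tsuyumine2}) and once numerically, by specializing $\tau$ to a hyperelliptic period matrix and evaluating both sides, which is the strategy I would adopt to validate the constants $2^{-3}$ and $2^{-3}3^{-2}$.
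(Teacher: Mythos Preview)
The paper does not actually prove this theorem: it is stated with the attribution \cite[Sections 23,24,25,26]{Tsuyumine2} and no argument is supplied in the present paper. So there is no ``paper's own proof'' to compare your proposal against; the result is simply imported from Tsuyumine.

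That said, your sketch is a faithful outline of Tsuyumine's original argument, as you yourself note. The key ingredients---Thomae's formula relating $\thetar[m](0,\tau)^4$ on the hyperelliptic locus to products of root differences indexed by $4+4$ partitions of $\{1,\ldots,8\}$, the explicit theta polynomials defining each generator $\alpha_4,\alpha_6,\ldots,\chi_{28}$, and the $S_8$-symmetrization bookkeeping that identifies the result with the appropriate $I_k D^j$---are exactly the ones Tsuyumine uses. Your remark that the scalar prefactors come from orbit multiplicities together with the Thomae normalization is correct in spirit, though in practice these constants are most safely pinned down by the kind of numerical specialization you mention at the end, since the sign and multiplicity tracking by hand is error-prone across eleven separate identities.

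One small caution: $\chi_{28}$ is not quite ``immediate from Thomae'' in the same sense as $\chi_{18}=0$; one still has to verify that the particular degree-$28$ theta polynomial defining $\chi_{28}$ collapses to $D^3$ after substitution, which is a genuine (if mechanical) computation rather than a formal consequence of the vanishing characteristic. But this does not affect the correctness of your overall plan.
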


These relations can be used to give modular expressions for some invariants of binary octics.

\begin{cor}\label{ModT} Let $C:\,y^2=f(x)$ be a genus $3$ hyperelliptic curve with  period matrix $\Omega=[\Omega_1,\Omega_2]$. Let $\tau=\Omega_2^{-1}\Omega_1$. Then
		$$\begin{array}{rcl}
	I_2(f)D(f)^2=&\rho(\gamma_{20})&=(2i\pi)^{90}\frac{\gamma_{20}(\tau)}{\operatorname{det}\Omega_2^{30}}\\
	I_3(f)D(f)^3=&\rho(\gamma_{30})&=(2i\pi)^{135}\frac{\gamma_{30}(\tau)}{\operatorname{det}\Omega_2^{45}}\\
	I_4(f)D(f)=&2^{3}3^2\rho(\alpha_{12})&=2^{3}3^2(2i\pi)^{54}\frac{\alpha_{12}(\tau)}{\operatorname{det}\Omega_2^{18}}
\\
	I_5(f)D(f)^2=&\rho(\beta_{22})&=(2i\pi)^{99}\frac{\beta_{22}(\tau)}{\operatorname{det}\Omega_2^{33}}\\
	I_6(f)=&2^3\rho(\alpha_{4})&=2^3(2i\pi)^{18}\frac{\alpha_{4}(\tau)}{\operatorname{det}\Omega_2^{6}}\\
%	\end{array}$$
%	$$\begin{array}{rcl}
	I_7(f)D(f)=&\rho(\beta_{14})&=(2i\pi)^{63}\frac{\beta_{14}(\tau)}{\operatorname{det}\Omega_2^{21}}\\
	I_8(f)D(f)^2=&\rho(\gamma_{24})&=(2i\pi)^{108}\frac{\gamma_{24}(\tau)}{\operatorname{det}\Omega_2^{36}}\\
	I_9(f)=&\rho(\alpha_{6})&=(2i\pi)^{27}\frac{\alpha_{6}(\tau)}{\operatorname{det}\Omega_2^{9}}\\
	I_{10}(f)D(f)=&\rho(\beta_{16})&=(2i\pi)^{72}\frac{\beta_{16}(\tau)}{\operatorname{det}\Omega_2^{24}}\\
	D^3(f)=&\rho(\chi_{28})&=(2i\pi)^{126}\frac{\chi_{28}(\tau)}{\operatorname{det}\Omega_2^{42}}.
	\end{array}$$
	In particular, we have a modular expression for the invariants $I_kD^k$ whose determine the isomorphism class of $C$.
	
\end{cor}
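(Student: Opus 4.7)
The corollary follows by combining Theorem \ref{ThmModular} with the explicit table of the theorem stated just above it (Tsuyumine). For each of the listed Siegel modular forms $f$, Tsuyumine identifies the algebraic invariant $\rho(f) = \sigma(\theta^* f) \in S(2,8)$ as a polynomial in the Tsuyumine invariants and the discriminant (for instance $\rho(\chi_{28}) = D^3$, $\rho(\gamma_{20}) = I_2 D^2$, $\rho(\alpha_4) = 2^{-3} I_6$). Theorem \ref{ThmModular} then converts this algebraic invariant, evaluated on a binary octic $F$ defining $C$, into the analytic expression
$$
\rho(f)(F) \;=\; (2i\pi)^{gh}\,\frac{\tilde f(\tau)}{\det \Omega_2^{h}}, \qquad g = 3.
$$
Equating the two expressions, moving the scalar factors $2^{-3}$ (for $\alpha_4$) and $2^{-3}3^{-2}$ (for $\alpha_{12}$) to the right-hand side, and carefully bookkeeping the weight $h$ yields each row of the table directly.

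For the ``in particular'' statement, I would argue as follows. By Proposition \ref{propTsuyu} the invariants $I_2, \ldots, I_{10}$ generate the graded ring $S(2,8)$, so the weighted projective point $(I_2:I_3:\cdots:I_{10})$ determines the isomorphism class of $C$. The ten formulas above provide a modular expression for each $I_k D^{j_k}$ (with the exponents $j_k$ read off the table), and since $D^3$ is itself modular via $\rho(\chi_{28}) = D^3$, one may clear the powers of $D$ in projective coordinates and recover the tuple $(I_2:\cdots:I_{10})$ from ratios of Siegel modular forms.

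Conceptually the proof is routine: no nontrivial step is involved beyond invoking Theorem \ref{ThmModular} once per row and substituting in the corresponding image under $\rho$. The only point that requires attention is the correct matching of the weight $h$ of each of $\alpha_k, \beta_k, \gamma_k, \delta_k, \chi_k$ (and hence the exponents of $(2i\pi)$ and $\det\Omega_2$) with the degree of $\rho(f)$ on the invariant-theoretic side, via the compatibility of the gradings of $A(\Gamma_3)$ and $S(2,8)$ through the Igusa map $\rho$ recalled in Section \ref{sec:IgusaMap}. This is really the only place where a careless computation could produce a wrong exponent, and I would verify it row by row against the degrees of the $I_k D^{j_k}$ appearing in the previous theorem.
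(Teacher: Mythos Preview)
Your proposal is correct and matches the paper's approach. The paper does not give an explicit proof of this corollary: it is stated as an immediate consequence of Tsuyumine's table of values $\rho(\alpha_4),\rho(\alpha_6),\ldots,\rho(\chi_{28})$ combined with Theorem~\ref{ThmModular}, which is exactly the argument you outline. Your caveat about carefully tracking the weight~$h$ (and hence the exponents of $(2i\pi)$ and $\det\Omega_2$) through the compatibility of gradings under $\rho$ is well placed---indeed, the exponents in the table correspond to $h$ equal to the \emph{degree of the invariant} $I_kD^{j_k}$ (i.e.\ $3k/2$ times the modular weight index), so the bookkeeping is the only nontrivial content.
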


\begin{remark}
	Only invariants of hyperelliptic genus $3$ curves of degree multiple of $3$ are modular. 
\end{remark}

\begin{remark}
	In \cite{cabrones}, the authors give modular expressions for certain combinations of the Dixmier-Ohno invariants of non-hyperelliptic genus $3$ curves (plane quartics).
\end{remark}

\section{Shioda invariants in term of Tsuyumine invariants}\label{Sec:relation}

In this section we compute the relations between Tsuyumine's invariants and Shioda's ones. The computations are just linear algebra and interpolation techniques.

\begin{theorem}\label{TinS}
	Tsuyumine invariants can be written in terms of Shioda invariants as follows:
	\small
	\begin{align*}
	I_2&=2^9\cdot3^2\cdot5\cdot7\cdot J_2, \\
	I_3&=2^{10}\cdot5^2\cdot7^3\cdot J_3, 
	\\
	I_4&=-2^{14}\cdot3\cdot5^2\cdot7\cdot J_2^2 +2^{15}\cdot3^2\cdot7^3 J_4,	\\
	I_5&=2^{14}\cdot3^{-1}\cdot5^3\cdot7^3\cdot J_2J_3 - 2^{14}\cdot3\cdot5\cdot7^4\cdot J_5,
	\\
	I_6&=2^{15}\cdot3^{-1}\cdot7\cdot17^3\cdot J_2^3 - 2^{18}\cdot3\cdot7^3\cdot17\cdot J_2J_4 + 2^{16}\cdot3^2\cdot5\cdot7^4\cdot J_3^2-2^{21}\cdot3\cdot 7^4\cdot J_6,
	\\
	I_7&=2^{16}\cdot5^4\cdot7^3 J_2^2J_3 - 2^{13}\cdot3\cdot5\cdot7^4\cdot17\cdot J_2J_5 - 2^{14}\cdot5\cdot7^5\cdot13\cdot J_3J_4 - 2^{15}\cdot3^2\cdot5\cdot7^5\cdot J_7,
	\end{align*}
	\begin{align*}
	I_8&=2^{15}\cdot3^{-2}\cdot7\cdot17\cdot6469\cdot J_2^4
	-2^{19}\cdot5\cdot7^3\cdot43\cdot J_2^2J_4
	- 2^{16}\cdot3^{-2}\cdot5\cdot7^4\cdot233 \cdot J_2J_3^2\\
	&-  2^{21}\cdot 7^4\cdot 37  \cdot J_2J_6
	+ 2^{18}\cdot 3^2\cdot 5\cdot 7^5 \cdot J_3J_5
	+ 2^{21}\cdot 3\cdot 7^4\cdot J_4^2
	+ 2^{20}\cdot 3^2\cdot 5\cdot 7^5 \cdot J_8,\\
	I_9&=-2^{15}\cdot3^{-3}\cdot 7^3\cdot134489\cdot J_2^3J_3
	+2^{13}\cdot 7^4\cdot 17\cdot 613\cdot J_2^2J_5
	+ 2^{14}\cdot3^{-1}\cdot 5\cdot 7^{5}\cdot1117 \cdot J_2J_3J_4\\
	&	-2^{15}\cdot3\cdot 5^2\cdot 7^5\cdot 19\cdot J_2J_7
	-2^{16}\cdot3\cdot 5\cdot 7^6\cdot J_3^3
	+2^{21}\cdot3^{-1}\cdot5^2\cdot7^6\cdot J_3J_6
	+0\cdot J_4J_5
	-2^{23}\cdot 3^2\cdot 7^6J_9,\\
	I_{10}&=2^{16}\cdot3^{-5}\cdot 7\cdot 17^2\cdot 223\cdot227\cdot J_2^5
	- 2^{21}\cdot3^{-3}\cdot 7^3\cdot 17\cdot 1097 \cdot J_2^3J_4
	+2^{17}\cdot3^{-4}\cdot 5\cdot 7^4\cdot37\cdot991\cdot J_2^2J_3^2\\
	&-2^{22}\cdot3^{-3}\cdot5^2\cdot7^4\cdot421\cdot J_2^2J_6
	- 2^{15}\cdot3\cdot5\cdot7^5\cdot17\cdot29\cdot J_2J_3J_5
	+2^{22}\cdot3^{-1}\cdot7^4\cdot23\cdot31\cdot J_2J_4^2\\
	&+2^{25}\cdot 5\cdot 7^5\cdot17\cdot J_2J_8
	+2^{16}\cdot5\cdot7^6\cdot23 \cdot J_3^2J_4
		-2^{17}\cdot3^2\cdot5\cdot7^6\cdot29\cdot J_3J_7
	+2^{25}\cdot3^{-1}\cdot7^5\cdot61\cdot J_4J_6\\
	&+0\cdot J_5^2
	+ 2^{26}\cdot3\cdot5\cdot7^6 \cdot J_{10}.
	\end{align*}
		
	\normalsize
	
\end{theorem}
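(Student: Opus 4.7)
Since the Shioda invariants $J_2,\ldots,J_{10}$ generate the graded ring of invariants of binary octics and the only syzygies between them (equations \eqref{eq:relations1}--\eqref{eq:relations5}) live in degrees $16,17,18,19,20$, the weight-$k$ component of $\mathbb{Q}[J_2,\ldots,J_{10}]$ injects into the space of degree-$k$ invariants for every $k\leq 15$. In particular, for each $k\leq 10$, the invariant $I_k$ admits a \emph{unique} expression as a $\mathbb{Q}$-linear combination of the monomials $J_2^{a_2}J_3^{a_3}\cdots J_{10}^{a_{10}}$ of weight $k$. Enumerating these monomials by counting partitions of $k$ into parts from $\{2,3,\ldots,10\}$ gives $1,1,2,2,4,4,7,8,12$ basis elements for $k=2,3,\ldots,10$, matching exactly the number of terms (including those with coefficient $0$, like $J_4J_5$ in $I_9$ or $J_5^2$ in $I_{10}$) displayed in the statement.

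The plan is to determine the coefficients by interpolation. Write
\[
I_k=\sum_{\alpha}c^{(k)}_{\alpha}M_{\alpha}
\]
with $M_\alpha$ ranging over the basis above and $c^{(k)}_\alpha\in\mathbb{Q}$ unknown. Pick a collection of concrete binary octics $f_1,\ldots,f_N$ (for example with small integer roots, or sampled randomly), with $N$ at least the number of unknowns $c^{(k)}_\alpha$. For each $f_j$, compute $I_k(f_j)$ directly from the symmetrized root-difference formula of Proposition~\ref{propTsuyu}, and compute the Shioda invariants $J_i(f_j)$ via the transvectant construction recalled in Section~\ref{Sec:genus3Inv}. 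Plugging into the ansatz yields an (over-determined) linear system over $\mathbb{Q}$ whose unique solution produces the coefficients $c_\alpha^{(k)}$, hence the claimed formulas.

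The main obstacle is computational. Tsuyumine's expressions involve sums indexed by sizeable orbits of $S_8$ on ordered partitions of the eight roots (e.g., $I_{10}$ needs a sum over $\binom{8}{4}/2$ unordered $4{+}4$ splittings together with additional inner permutations), so each evaluation $I_k(f_j)$ is heavy; one mitigates this by carefully exploiting the symmetries appearing in Proposition~\ref{propTsuyu}. A standard trick is to choose some of the $f_j$ to have extra automorphisms (for instance, $f$ invariant under $x\mapsto-x$, forcing several odd-degree Shioda invariants to vanish): this decouples the linear system and makes both the symbolic evaluation and the resulting algebra tractable. Once the system is solved, the uniqueness principle above shows that the polynomial identity holds over $\mathbb{Q}$ as soon as it is verified on sufficiently many specializations; a final check on an independent generic $f$ confirms correctness.
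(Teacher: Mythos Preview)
Your proposal is correct and follows essentially the same approach as the paper: both argue that each $I_k$ must be a $\mathbb{Q}$-linear combination of weight-$k$ monomials in the $J_i$, and both determine the coefficients by interpolation---evaluating Tsuyumine's root-difference formulas and Shioda's transvectant invariants on explicit octics with integer roots, then solving the resulting linear system (the paper even includes the Magma code for $k=4$). Your explicit uniqueness argument via the degrees of the Shioda syzygies and your suggestion to exploit extra automorphisms are nice elaborations, but they do not change the underlying strategy.
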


\begin{proof}
	Shioda \cite{Shioda} proved that Shioda invariants generate the graded ring $S(2,8)$ of invariants of binary forms. Tsuyumine proved the same thing for his invariants, see Proposition~\ref{propTsuyu}. Hence Tsuyumine invariant $I_n$ has to be a (rational) linear combination of terms $\prod_{i=2}^{10}J_i^{n_i}$ with $\sum_{i=2}^{10}n_i=n$. By taking enough hyperelliptic curves of genus $3$ and computing their Tsuyumine and Shioda invariants we get enough conditions to solve a linear system that produces as solution the coefficients of the combination of Shioda invariants needed to obtain each Tsuyumine invariant. Notice that even if Tsuyumine invariants are a priori difficult to compute, for the interpolation we can consider curves such that $f(x,z)$ completely factors over $\mathbb{Z}$. 
	
	The computations have been performed with Magma \cite{magma}. See Appendix $1$ for the code used for $n=4$.
\end{proof}

\begin{theorem}\label{SinT}
	Shioda invariants can be written in terms of Tsuyumine invariants as follows:
	\small
	\begin{align*}
	J_2=&2^{-9}\cdot3^{-2}\cdot5^{-1}\cdot7^{-1}\cdot I_2,\hspace{10cm} \\
	J_3=&2^{-10}\cdot5^{-2}\cdot7^{-3}\cdot I_3, 
	\\
	J_4= &2^{-19}\cdot3^{-5}\cdot7^{-4} \cdot I_2^2 +2^{-15}\cdot3^{-2}\cdot7^{-3} I_4,	
	\\
	J_5=&2^{-19}\cdot3^{-4}\cdot5^{-1}\cdot7^{-5}\cdot I_2I_3 - 2^{-14}\cdot3^{-1}\cdot5^{-1}\cdot7^{-4}\cdot I_5,\\
	J_6= &-2^{-33}\cdot3^{-8}\cdot5^{-3}\cdot7^{-6}\cdot11\cdot 17 \cdot I_2^3 -2^{-27}\cdot3^{-4}\cdot5^{-1}\cdot7^{-5}\cdot17 \cdot I_2I_4 + 2^{-25}\cdot3\cdot5^{-3}\cdot7^{-6}\cdot I_3^2\\&-2^{-21}\cdot3^{-1}\cdot 7^{-4}\cdot I_6,
	\\
	J_7=&2^{-29}\cdot3^{-7}\cdot5^{-1}\cdot7^{-7}I_2^2I^3 + 2^{-25}\cdot3^{-4} \cdot5^{-2}\cdot7^{-6}\cdot17\cdot I_2I_5 -2^{-26} \cdot3^{-4}\cdot5^{-2}\cdot7^{-6}\cdot13\cdot I_3I_4 -\\ &2^{-15}\cdot3^{-2}\cdot5^{-1}\cdot7^{-5}\cdot I_7,
\\
	J_8=&2^{-39}\cdot3^{-8}\cdot5^{-5}\cdot7^{-9}\cdot13^2\cdot I_2^4 - 2^{-35}\cdot3^{-5}\cdot5^{-3}\cdot7^{8}\cdot59\cdot I_2^2I_4 + 
	2^{-31}\cdot3^{-6}\cdot5^{-5}\cdot7^{-8}\cdot83\cdot I_2I_3^2 \\
	&- 2^{-29}\cdot3^{-5}\cdot5^{-2}\cdot7^{-6}\cdot37\cdot I_2I_6 +
	2^{-26}\cdot3^{-1}\cdot5^{-3}\cdot7^{-7}\cdot I_3I_5 - 2^{-29}\cdot3^{-5}\cdot5^{-1}\cdot7^{-7}\cdot I_4^2 +\\ &2^{-20}\cdot3^{-2}\cdot5^{-1}\cdot7^{-5}\cdot I_8,
\\
	J_9=&2^{-44}\cdot3^{-10}\cdot5^{-5}\cdot7^{-9}\cdot11981\cdot I_2^3I_3 - 2^{-36}\cdot3^{-7}\cdot5^{-3}\cdot7^{-8}\cdot17^2\cdot I_2^2I_5 + \\
	&2^{-38}\cdot3^{-7}\cdot5^{-2}\cdot7^{-8}\cdot31\cdot I_2I_3I_4 + 2^{-32}\cdot3^{-5}\cdot7^{-7}\cdot19\cdot I_2I_7 + 
	2^{-36}\cdot3^{-2}\cdot5^{-5}\cdot7^{-9}\cdot11\cdot I_3^3 -\\ &2^{-3}\cdot3^{-4}\cdot7^{-7}\cdot I_3I_6 + 0\cdot I_4I_5 - 
	2^{-23}\cdot3^{-2}\cdot7^{-6}\cdot I_9,\\
	J_{10}=&-2^{-49}\cdot3^{-11}\cdot5^{-6}\cdot7^{-11}\cdot4177\cdot I_2^5 + 2^{-47}\cdot3^{-8}\cdot5^{-4}\cdot7^{-10}\cdot1433\cdot I_2^3I_4 +\\
	&2^{-41}\cdot3^{-9}\cdot5^{-6}\cdot7^{-11}\cdot7927\cdot I_2^2I_3^2 + 2^{-43}\cdot3^{-9}\cdot5^{-3}\cdot7^{-9}\cdot11^2\cdot1289\cdot I_2^2I_6 -
		\end{align*}
		\begin{align*}
	&2^{-36}\cdot3^{-4}\cdot5^{-4}\cdot7^{-9}\cdot17\cdot I_2I_3I_5 + 2^{-41}\cdot3^{-8}\cdot5^{-2}\cdot7^{-9}\cdot149\cdot I_2I_4^2 -\\ 
	&2^{-30}\cdot3^{-5}\cdot5^{-2}\cdot7^{-7}\cdot17\cdot I_2I_8 - 2^{-39}\cdot3^{-3}\cdot5^{-4}\cdot7^{-10}\cdot59\cdot I_3^2I_4 -\\ &2^{-34}\cdot3^{-1}\cdot5^{-3}\cdot7^{-8}\cdot29\cdot I_3I_7 + 2^{-37}\cdot3^{-5}\cdot5^{-1}\cdot7^{-8}\cdot61\cdot I_4I_6 + 0\cdot I_5^2 +\\ &2^{-26}\cdot3^{-1}\cdot5^{-1}\cdot7^{-6}\cdot I_{10}.
	\\
	\end{align*}
	\normalsize
\end{theorem}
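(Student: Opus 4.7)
The plan is to invert, degree by degree, the formulas from Theorem~\ref{TinS}. Inspecting those formulas, one observes that for each $n\in\{2,\ldots,10\}$ the expression for $I_n$ has the shape $I_n = \lambda_n J_n + R_n(J_2,\ldots,J_{n-1})$, where $\lambda_n$ is an explicit nonzero rational number (e.g.\ $\lambda_2 = 2^9\cdot 3^2\cdot 5\cdot 7$, $\lambda_3 = 2^{10}\cdot 5^2\cdot 7^3$, and continuing through $\lambda_{10} = 2^{26}\cdot 3\cdot 5\cdot 7^6$) and $R_n$ is a homogeneous polynomial of weighted degree $n$ in $J_2,\ldots,J_{n-1}$. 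Solving for $J_n$ yields $J_n = \lambda_n^{-1}(I_n - R_n(J_2,\ldots,J_{n-1}))$, so the desired formulas can be built up recursively, substituting previously obtained expressions for $J_2,\ldots,J_{n-1}$ into $R_n$ at each step.

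Concretely, the recursion starts from $J_2 = I_2/\lambda_2$ and $J_3 = I_3/\lambda_3$, which are immediate. For $n=4$ the relation $I_4 = -2^{14}\cdot 3\cdot 5^2\cdot 7\cdot J_2^2 + \lambda_4 J_4$ gives $J_4$ once the previously obtained expression for $J_2$ is substituted; the same triangular pattern continues for $n=5,6,\ldots,10$. Equivalently, one can simply rerun the interpolation argument used in the proof of Theorem~\ref{TinS} with the roles of the two families of invariants reversed: each $J_n$ is a priori a rational linear combination of the finitely many weighted monomials of degree $n$ in $I_2,\ldots,I_{10}$, and evaluating both sides on sufficiently many binary octics that factor completely over $\Q$ yields a linear system whose solution produces the coefficients.

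The main obstacle is purely computational rather than conceptual. For small $n$ the monomial combinatorics is trivial, but for $n=8,9,10$ the number of weighted monomials grows substantially, and both the monomial basis on the Tsuyumine side and the substitution step (which multiplies out polynomials of rising total degree in $I_2,\ldots,I_{n-1}$) are best handled by computer algebra, again in Magma \cite{magma} as in the previous theorem: the $I_k$ are evaluated via the formulas of Proposition~\ref{propTsuyu} on completely split examples, the $J_k$ are evaluated through their efficient transvectant description, and the final rational coefficients are read off from the solution of the linear system. Neither the five algebraic relations~\eqref{eq:relations1}--\eqref{eq:relations5} among $J_8,J_9,J_{10}$ and the lower Shioda invariants, nor the lack of algebraic independence among the $I_k$, obstructs the argument: Theorem~\ref{TinS} provides one valid polynomial representative for each $I_n$, and the inversion procedure above produces one valid polynomial representative for each $J_n$, which is all that is claimed.
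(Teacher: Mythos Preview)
Your proposal is correct and follows essentially the same approach as the paper: the paper's proof is a one-line observation that the coefficient of $J_n$ in the expression for $I_n$ from Theorem~\ref{TinS} is nonzero for every $n$, so the triangular system can be inverted. Your write-up is a more explicit and careful rendering of that same idea (spelling out the recursion $J_n=\lambda_n^{-1}(I_n-R_n)$, the alternative interpolation route, and the irrelevance of the relations~\eqref{eq:relations1}--\eqref{eq:relations5}), but the underlying argument is identical.
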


\begin{proof}
	Since none of the coefficients of $J_n$ for $I_n$ in Theorem \ref{TinS} are zero, we can (and do) compute the inverse transformations to obtain Shioda invariants in terms of Tsuyumine invariants.
\end{proof}

\begin{cor}\label{ModS}
	The multiples of Shioda invariants $J_kD^k$ for $k=2,\dots,10$ are modular and a modular expression is given by the formulas in Corollary~\ref{ModT} and Theorem~\ref{SinT}. 
\end{cor}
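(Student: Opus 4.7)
The plan is to take the explicit expression of each Shioda invariant $J_k$ as a linear combination of monomials in the Tsuyumine invariants $I_n$ given in Theorem~\ref{SinT}, multiply through by $D^k$, and then show that every resulting monomial can be reorganised as a product of pieces each of which appears in Corollary~\ref{ModT}. Since the space of modular invariants is closed under linear combinations and products, this establishes that $J_kD^k$ is modular and simultaneously furnishes the desired modular expression (in terms of the Siegel modular forms $\alpha_*,\beta_*,\gamma_*,\delta_*,\chi_*$) by substituting the right-hand sides from Corollary~\ref{ModT}.

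The concrete bookkeeping step is as follows. Theorem~\ref{SinT} writes
\[
J_k \;=\; \sum_{(a_n)} c_{(a_n)} \prod_{n=2}^{10} I_n^{a_n}, \qquad \sum_{n=2}^{10} n\,a_n = k.
\]
For each $n \in \{2,3,\dots,10\}$ let $m_n$ be the exponent of $D$ occurring in the modular combination of Corollary~\ref{ModT}, namely
\[
(m_2,m_3,m_4,m_5,m_6,m_7,m_8,m_9,m_{10}) = (2,3,1,2,0,1,2,0,1).
\]
Then I would rewrite a generic monomial in the expansion of $J_kD^k$ as
\[
\Bigl(\prod_{n} I_n^{a_n}\Bigr)\cdot D^{k} \;=\; \Bigl(\prod_{n} (I_n D^{m_n})^{a_n}\Bigr)\cdot D^{k-\sum_n a_n m_n}.
\]
By Corollary~\ref{ModT} each factor $I_n D^{m_n}$ is modular, so the problem is reduced to showing that the residual power $k-\sum_n a_n m_n$ of $D$ can be assembled from modular blocks.

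The key numerical point, which I would highlight, is that $n - m_n\in\{0,0,3,3,6,6,6,9,9\}$ is a multiple of $3$ for every $n$. Consequently
\[
k - \sum_n a_n m_n \;=\; \sum_n (n - m_n)\, a_n
\]
is a non-negative multiple of $3$, so the residual factor equals $\bigl(D^3\bigr)^{(k-\sum a_n m_n)/3}$, and $D^3$ is itself modular by Corollary~\ref{ModT} (it is the image of $\chi_{28}$ under $\rho$). Hence every monomial in the expansion of $J_kD^k$ is a product of modular invariants and is therefore modular, and the same holds for the $\Z$-linear combination giving $J_kD^k$. This completes the sketch. The ``main obstacle'' is really only this divisibility-by-$3$ observation (which is consistent with the remark after Corollary~\ref{ModT} that all modular invariants of hyperelliptic genus $3$ curves have degree a multiple of $3$, noting that $\deg(J_kD^k)=15k$ is always divisible by $3$); once it is in hand the argument is a purely formal substitution.
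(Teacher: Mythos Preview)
Your proof is correct and is essentially the argument the paper intends (the paper states Corollary~\ref{ModS} without proof, as an immediate consequence of Corollary~\ref{ModT} and Theorem~\ref{SinT}). Your divisibility-by-$3$ bookkeeping for the residual power of $D$ is exactly the point that makes the substitution work, and it is consistent with the remark in the paper that only degrees divisible by~$3$ are modular.

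One small simplification you could have taken: the last line of Corollary~\ref{ModT} already records that each $I_nD^n$ is modular (this is itself the divisibility-by-$3$ observation applied once to $I_nD^{m_n}\cdot D^{n-m_n}$). Granting that, the homogeneity $\sum n\,a_n=k$ in Theorem~\ref{SinT} gives directly
\[
J_kD^k \;=\; \sum_{(a_n)} c_{(a_n)} \prod_{n}(I_nD^n)^{a_n},
\]
so no residual power of $D$ remains and the modular expression is immediate. Your longer route via the specific exponents $m_n$ is equivalent and has the virtue of making the final Siegel modular forms $\alpha_*,\beta_*,\gamma_*,\delta_*,\chi_{28}$ in the expression fully explicit.
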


\begin{cor}
	The relations for the $3$ last Tsuyumine invariants are of the form:
	\begin{eqnarray*}
		I_8^2+A'_6I_{10}+A'_7I_9+A'_8I_8+A'_{16}=0\\
		I_8I_9+B'_7I_{10}+B'_8I_9+B'_9I_8+B'_{17}=0\\
		I_8I_{10}+\gamma' I_9^2+\gamma''I_2I_8^2+C'_8I_{10}+C'_9I_9+C'_{10}I_8+C'_{18}=0\\
		I_9I_{10}+D'_2I_8I_9+D'_9I_{10}+D'_{10}I_9+D'_{11}I_8+D'_{19}=0\\
		I_{10}^2+\epsilon' I_2I_9^2+\epsilon''I_2I_8I_{10}+E'_4I_8^2+E'_{10}I_{10}+E'_{11}I_9+E'_{12}I_8+E'_{20}=0
	\end{eqnarray*}
and the coefficients $A'_i,B'_i,C'_i,D'_i,E'_i,\gamma',\gamma'',\epsilon'$ and $\epsilon''$ can be explicitly computed.	
\end{cor}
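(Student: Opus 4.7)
The strategy is to substitute the formulas of Theorem~\ref{SinT} into the five Shioda relations \eqref{eq:relations1}--\eqref{eq:relations5} and rescale each so as to be monic in its intended leading quadratic monomial in $\{I_8, I_9, I_{10}\}$. The key structural observation is that, writing $J_k = c_k I_k + R_k$ for $k = 8, 9, 10$ with nonzero rational constants $c_k$, we have $R_8, R_9 \in \mathbb{Q}[I_2, \ldots, I_7]$, while $R_{10}$ additionally contains the single cross monomial
$$
-2^{-30} \cdot 3^{-5} \cdot 5^{-2} \cdot 7^{-7} \cdot 17 \cdot I_2 I_8.
$$
Thus all ``extra'' quadratic-in-$\{I_8, I_9, I_{10}\}$ terms appearing in the Tsuyumine relations --- namely $\gamma'' I_2 I_8^2$, $D'_2 I_8 I_9$, $\epsilon'' I_2 I_8 I_{10}$ and $E'_4 I_8^2$ --- can only originate from this single cross term of $J_{10}$, because the expressions $J_2, \ldots, J_9$ contain no monomial of the form $(\text{polynomial in }I_2,\ldots,I_7) \cdot I_k$ with $k \in \{8, 9, 10\}$ other than the ``diagonal'' $I_k$ itself.

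Concretely, substituting into \eqref{eq:relations1} and dividing by $c_8^2$ gives directly the first claimed form, as none of $J_8^2, A_6 J_{10}, A_7 J_9, A_8 J_8, A_{16}$ introduces new quadratic-in-$\{I_8, I_9, I_{10}\}$ monomials. The same is true for \eqref{eq:relations2}. For \eqref{eq:relations3}, the expansion $J_8 J_{10} = (c_8 I_8 + R_8)(c_{10} I_{10} + R_{10})$ contributes the cross product $c_8 I_8 \cdot (I_2 I_8\text{-part of }R_{10})$, producing the $\gamma'' I_2 I_8^2$ term after normalization by $c_8 c_{10}$; the term $\gamma J_9^2$ contributes $\gamma' I_9^2$. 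The same mechanism applied to $J_9 J_{10}$ in \eqref{eq:relations4} produces $D'_2 I_8 I_9$ with $D'_2 \propto I_2$, and applied to $J_{10}^2$ in \eqref{eq:relations5} produces both $\epsilon'' I_2 I_8 I_{10}$ (from the cross term $2 c_{10} I_{10} \cdot I_2 I_8$) and $E'_4 I_8^2$ (from the square of the $I_2 I_8$ summand, so that $E'_4 \propto I_2^2$); finally $\epsilon' I_2 I_9^2$ arises from $\epsilon J_2 J_9^2 \mapsto \epsilon c_{J_2} c_9^2 I_2 I_9^2$, using that $J_2$ is purely $c_{J_2} I_2$.

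Once these quadratic-in-$\{I_8, I_9, I_{10}\}$ contributions are accounted for, the remaining coefficients $A'_i, B'_i, C'_i, D'_i, E'_i$ (polynomials in $I_2, \ldots, I_7$ whose degree equals the subscript) and the terms $A'_{16}, B'_{17}, C'_{18}, D'_{19}, E'_{20}$ independent of $I_8, I_9, I_{10}$ are obtained by collecting the remaining contributions after substitution and division by the respective scalar normalizers $c_8^2, c_8 c_9, c_8 c_{10}, c_9 c_{10}, c_{10}^2$. The main obstacle is purely bookkeeping: the volume of polynomial expansions is substantial, but the work is mechanical and can be performed in Magma as in the proof of Theorem~\ref{TinS}, once the Shioda constants $A_i, B_i, C_i, D_i, E_i, \gamma, \epsilon$ from \cite[p.~1033]{Shioda} are input.
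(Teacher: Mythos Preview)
Your proof is correct and follows essentially the same approach as the paper: substitute the expressions of Theorem~\ref{SinT} for the $J_k$ into the Shioda relations \eqref{eq:relations1}--\eqref{eq:relations5} and collect terms. (The paper's one-line proof cites Theorem~\ref{TinS}, but the substitution it describes is precisely that of Theorem~\ref{SinT}, which you correctly invoke.) Your added structural analysis---tracking that the single cross monomial $I_2I_8$ in $J_{10}$ is the sole source of the extra quadratic-in-$\{I_8,I_9,I_{10}\}$ terms $\gamma'' I_2 I_8^2$, $D'_2 I_8 I_9$, $\epsilon'' I_2 I_8 I_{10}$, $E'_4 I_8^2$---is a genuine refinement: it explains \emph{why} the Tsuyumine relations have exactly this shape rather than merely asserting that they do after a computer check, and it pins down $D'_2 \propto I_2$ and $E'_4 \propto I_2^2$ without computation.
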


\begin{proof}
	Make the substitution of Shioda invariants in equations \ref{eq:relations1},\ref{eq:relations2},\ref{eq:relations3},\ref{eq:relations4} and \ref{eq:relations5} by their expressions in terms of Tsuyumine invariants in Theorem \ref{TinS}.
\end{proof}

\section{Absolute modular invariants for genus $3$ hyperelliptic curves}\label{Sec:AbsModInv}

In order to construct CM genus $3$ hyperelliptic curves by the CM method, we need {modular absolute invariants} whose denominators only contains primes of bad reduction, that is, those ones we are able to control after the results in \cite{BCLLMNO,KLLNOS}. The modular condition is needed to be able to compute them from the period matrix, and the absolute condition is asked for not having to work with the parts $2i\pi$ and $\operatorname{det}\Omega_2$ of the modular expressions of the invariants, see Theorem~\ref{ThmModular}. 

We start with the weighted projective invariants:
$$
(I_2: I_3: I_4: I_5: I_6: I_7: I_8: I_9: I_{10}).
$$

A naive way of proceed will be taking the absolute invariants:
$$
(I^7_2/D, I^{14}_3/D^3, I^7_4/D^2, I^{14}_5/D^5, I^7_6/D^3, I^2_7/D, I^7_8/D^4, I^{14}_9/D^9, I^7_{10}/D^5),
$$
but these ones do not determine isomorphism classes of genus $3$ hyperelliptic curves. For example, after fixing the value of $I_2$ from the one of $D$ and $I^7_2/D$, $I_3$ is only determined up to sign.

The good idea to get the desired properties is to normalize with a degree $1$ quotient of invariants of the form $J=D/J_0$: the degree $1$ conditions permits to fix the isomorphism class, and using $D$ forces to only have primes of bad reduction dividing the numerator of $J$ and hence the denominator of the absolute invariants\footnote{Another advantage of having the discriminant in the denominator of these invariants is that they are always defined since smooth curves have non-zero discriminant.}. We will take $J_0=I_2^2I_3^3$, but other choices would be possible. We obtain the following absolute invariants:
$$
(I_2/J^2, I_3/J^3, I_4/J^4, I_5/J^5, I_6/J^6, I_7/J^7, I_8/J^8, I_9/J^9, I_{10}/J^{10})=
$$
$$
=(\frac{I_2^5I_3^6}{D^2},\frac{I_2^6I_3^{10}}{D^3},\frac{I_2^8I_3^{12}I_4}{D^4},\frac{I_2^{10}I_3^{15}I_5}{D^5}, \frac{I_2^{12}I_3^{18}I_6}{D^6}, \frac{I_2^{14}I_3^{21}I_7}{D^7}, \frac{I_2^{16}I_3^{24}I_8}{D^8}, \frac{I_2^{18}I_3^{27}I_9}{D^9} ,\frac{I_2^{20}I_3^{30}I_{10}}{D^{10}})=
$$
$$
=(\rho(\frac{\gamma_{20}^5\delta_{30}^6}{\chi_{28}^{10}}), \rho(\frac{\gamma_{20}^6\delta_{30}^{10}}{\chi_{28}^{15}}),
\rho(\frac{\gamma_{20}^8\delta_{30}^{12}\alpha_{12}}{\chi_{28}^{19}}),
\rho(\frac{\gamma_{20}^{10}\delta_{30}^{15}\beta_{22}}{\chi_{28}^{24}}),
\rho(\frac{\gamma_{20}^{12}\delta_{30}^{18}\alpha_4}{\chi_{28}^{28}}),
\rho(\frac{\gamma_{20}^{14}\delta_{30}^{21}\beta_{14}}{\chi_{28}^{33}}),
$$
$$
\rho(\frac{\gamma_{20}^{16}\delta_{30}^{24}\gamma_{24}}{\chi_{28}^{38}}),
\rho(\frac{\gamma_{20}^{18}\delta_{30}^{27}\alpha_6}{\chi_{28}^{42}}),
\rho(\frac{\gamma_{20}^{20}\delta_{30}^{30}\beta_{16}}{\chi_{28}^{47}})).
$$

These quotients of modular forms fit into the assumptions of \cite[Thm. 1.1]{IKLLMMV} and look like good candidates to run the CM-method for genus $3$ hyperelliptic curves.

Even if considering very big degree quotients of invariants (this is the price to pay for getting $J=D/J_0$), computations with them are not expected to be very expensive since they involve big powers of just a few modular functions.

\section{Bad reduction of genus $3$ CM hyperelliptic curves}\label{Sec:RedType}
Let $K$ be a field of characteristic different from $2$, and let $A$ be a principally polarized abelian variety of dimension $3$. As we discussed in Section~\ref{SubSec:algthetas} we can attach to it the values of its $36$ even theta constants $\{\thetar_i\}_{i=1}^{36}$.

\begin{theorem}\label{numberofthetas}
	Let $A/K$ be a principally polarized abelian variety of dimension $3$. Then the number of even theta constants equal to $0$ is:
	\begin{enumerate}
		\item[-] $0$ if and only if it is the jacobian of a non-hyperelliptic curve.
		\item[-] $1$ if and only if it is the jacobian of a hyperelliptic curve.
		\item[-] $6$ if and only if it is the product of an elliptic curve and a genus 2 jacobian with the product polarization.
		\item[-] $9$ if and only if it is the product of three elliptic curves with the product polarization. 
	\end{enumerate}
\end{theorem}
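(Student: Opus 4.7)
The plan is to reduce everything to the unique decomposition of $(A,\lambda)$ into indecomposable principally polarized factors and then count vanishing even theta constants in each case separately. By the uniqueness of such a decomposition (Debarre), and since indecomposable p.p.a.v.\ of dimension at most $3$ are exactly jacobians of smooth curves — an elliptic curve in dimension $1$, a smooth genus $2$ jacobian in dimension $2$, and a smooth genus $3$ jacobian in dimension $3$ by Oort--Ueno — the only possibilities for $(A,\lambda)$ are $\Jac(C)$ with $C$ smooth of genus $3$ (hyperelliptic or not), $E\times\Jac(C_2)$ with $E$ elliptic and $C_2$ smooth of genus $2$, and $E_1\times E_2\times E_3$. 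Since the four values $0,1,6,9$ are distinct, both implications in the theorem will follow at once once each case is verified.

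For the two jacobian cases I would invoke the classical theory of theta nulls. By Riemann's theorem, vanishing of $\thetar_m(0,\tau)$ at an even characteristic corresponds to the existence of an effective even theta characteristic $L$ on $C$, and even parity forces $h^0(L)$ to be even and positive, hence $\geq 2$. On a non-hyperelliptic smooth genus $3$ curve, where $\deg L=2$, this would yield a $g^1_2$ and force hyperellipticity — a contradiction — so the count is $0$. For a hyperelliptic smooth genus $3$ curve I would appeal to Thomae's formula (Mumford, \emph{Tata Lectures on Theta II}): the non-vanishing even theta constants are in bijection with unordered partitions of the $8$ Weierstrass points into two subsets of size $4$, giving $\tfrac{1}{2}\binom{8}{4}=35$ non-vanishing and hence exactly $36-35=1$ vanishing even theta constant.

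For the product cases I would exploit the multiplicativity of theta under orthogonal direct sums: if $\tau=\tau_1\oplus\tau_2$ is block-diagonal then $\thetar_{(m_1,m_2)}(0,\tau)=\thetar_{m_1}(0,\tau_1)\cdot\thetar_{m_2}(0,\tau_2)$, and $(m_1,m_2)$ is even exactly when $m_1$ and $m_2$ have the same parity. A smooth elliptic curve contributes $3$ nonvanishing even and $1$ vanishing odd theta constants; a smooth genus $2$ jacobian contributes $10$ nonvanishing even and $6$ vanishing odd. For $E\times\Jac(C_2)$ the $36$ even product characteristics split as $3\cdot 10=30$ of type (even,even), all nonvanishing, and $1\cdot 6=6$ of type (odd,odd), all vanishing. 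For $E_1\times E_2\times E_3$ the even product characteristics are those with an even number of odd components: $3^3=27$ with none odd (nonvanishing) and $\binom{3}{1}\cdot 3\cdot 1\cdot 1=9$ with exactly two odd (vanishing). The main technical input, and hence the main obstacle, is the sharp Thomae-formula count in the hyperelliptic genus $3$ case; once that is granted, everything else reduces to elementary combinatorial bookkeeping through the product decomposition.
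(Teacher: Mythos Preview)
Your proposal is correct and follows essentially the same strategy as the paper: classify principally polarized abelian threefolds up to their decomposition into indecomposable factors, then count vanishing even theta constants separately in each of the four cases using the multiplicativity of theta functions on block-diagonal period matrices. The paper's proof is briefer because it outsources the jacobian cases to \cite{LLR} and the restriction to the set $\{0,1,6,9\}$ to \cite{Glass}, whereas you supply the arguments directly via Riemann's singularity theorem and Thomae's formula; your version is more self-contained but not genuinely different in method.

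One small point you should address: the theorem is stated over an arbitrary field $K$ (of characteristic $\neq 2$), but your arguments for the jacobian cases are phrased analytically. The paper handles this by remarking that the vanishing statements depend only on algebraic identities among theta constants, which continue to hold for the algebraic theta constants of Section~\ref{SubSec:algthetas}. You should insert a sentence to the same effect, or alternatively note that the correspondence between vanishing of $\thetar_m$ and effective even theta characteristics (and the Thomae-type description in the hyperelliptic case) is available algebraically.
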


\begin{proof} 
	The only possibilities being $0,1,6,9$ is actually proved in \cite{Glass} when the abelian variety is defined over the complex numbers. Since to prove it only the algebraic relations of the theta constants are used and these are still valid for algebraic theta constants, the result remains true for any field.
	
	The only options for a p.p.a.v. of dimension 3 are being a jacobian of a genus 3 curve or the product of an elliptic curve and a p.p.a.v. of dimension $2$. We compute the number of zero theta constants in each case to conclude the result in the Theorem.
	
	The jacobian cases are classic and well-known, see for instance\cite[Section 2]{LLR} for a proof.
	
	The cases with a decomposable polarization are the cases $E\times B$ (with $B$ a genus $2$ jacobian) and $E_1\times E_2\times E_3$ and the argument is analogous in both of them. Let us detail the case $A=E_1\times E_2\times E_3$ with the product polarization. By \cite[Chp.1, Thm. 11]{Rauch}, we have that $\thetar\car{abc}{def}=\thetar\car{a}{d}\thetar\car{b}{e}\thetar\car{c}{f}$.
%	We can take a period matrix that looks like $\left(\begin{array}{c|c|c}
%	Z_1 & 0 & 0 \\ \hline
%	0 & Z_2 & 0 \\ \hline
%	0 & 0 & Z_3
%	\end{array}\right)$, and hence t
	Then the $9$ theta constants corresponding to the indices:
	$$
	\car{011}{011},\car{011}{111}, \car{111}{011}, \car{101}{101},\car{101}{111},\car{111}{101},\car{110}{110},\car{110}{111},\car{111}{110}
	$$
%	\begin{multline*}
%	\begin{pmatrix}
%	0 & 1 & 1 \\ 0 & 1 & 1
%	\end{pmatrix}\,\,\,
%	\begin{pmatrix}
%	0 & 1 & 1 \\ 1 & 1 & 1
%	\end{pmatrix}\,\,\,
%	\begin{pmatrix}
%	1 & 1 & 1 \\ 0 & 1 & 1
%	\end{pmatrix}\,\,\,
%	\begin{pmatrix}
%	1 & 0 & 1 \\ 1 & 0 & 1
%	\end{pmatrix}\,\,\,\\
%	\begin{pmatrix}
%	1 & 0 & 1 \\ 1 & 1 & 1
%	\end{pmatrix}\,\,\,
%	\begin{pmatrix}
%	1 & 1 & 1 \\ 1 & 0 & 1
%	\end{pmatrix}\,\,\,
%	\begin{pmatrix}
%	1 & 1 & 0 \\ 1 & 1 & 0
%	\end{pmatrix}\,\,\,
%	\begin{pmatrix}
%	1 & 1 & 0 \\ 1 & 1 & 1
%	\end{pmatrix}\,\,\,
%	\begin{pmatrix}
%	1 & 1 & 1 \\ 1 & 1 & 0
%	\end{pmatrix}.
%	\end{multline*}
	are equal to zero while the other ones are different from zero.
\end{proof}

Let us take now $K$ to be a discrete valuation field of characteristic different from $2$ with uniformizer $\pi$ and valuation $v$. We are interested in studying the reduction type of the stable model of genus $3$ hyperelliptic curves defined over $K$. Let $\{\thetar_i\}_{i=1}^{36}$ be the even theta constants of $C$. There exists an element $\lambda\in\bar{K}^*$ such that the minimal valuation of the normalized values $\{\lambda\cdot\thetar_i\}_{i=1}^{36}$ is equal to $0$. We say that $\{v(\lambda\cdot\thetar_i)\}_{i=1}^{36}$ are the normalized valuations of the theta constants of $C$.

\begin{proposition}\label{propCluster}
	Let $C/K$ be a hyperelliptic genus 3 curve. Then 
	\begin{enumerate}
		\item[(i)] (after normalization) the valuation of exactly $6$ theta constants is positive if and only if there exists a model of $C:\,y^2=f(x)$ for which the associated cluster picture \cite[Def. 1.26]{DDMM2} is 
		%$\cdot\cdot\cdot \cdot \cdot(\cdot\cdot\cdot)$. 
		 \begin{center}\includegraphics[height=0.5cm]{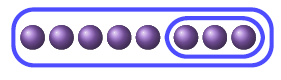}\end{center}
		\item[(ii)] (after normalization) the valuation of exactly $9$ theta constants is positive if and only if there exists a model of $C:\,y^2=f(x)$ for which the associated cluster points structure is 
		%$\cdot\cdot(\cdot \cdot \cdot)(\cdot\cdot\cdot)$. 
		 \begin{center}\includegraphics[height=0.5cm]{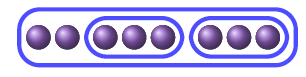}\end{center}
	\end{enumerate} 
Moreover, from those models we can compute the special fiber of the stable model of the curve.
\end{proposition}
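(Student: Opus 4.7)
The plan is to combine Theorem~\ref{numberofthetas} applied to the special fibre of the Jacobian with the cluster-picture description of the stable model of a hyperelliptic curve. As in Section~\ref{Sec:RedType}, we work under the standing hypothesis that the stable reduction of $\Jac(C)$ is again a p.p.a.v.\ of dimension $3$; in particular the dual graph of the stable model of $C$ must be a tree, so the stable model is a nodal union of smooth components whose genera sum to $3$, and the (principally polarized) reduced Jacobian decomposes as the product of the Jacobians of those components with the product polarization.

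First I would argue that the normalized algebraic theta constants $\lambda\cdot\thetar_i\in\Oc_K$ of Subsection~\ref{SubSec:algthetas} reduce mod $\pi$ to the algebraic theta constants of the special fibre of the (N\'eron/stable model of the) Jacobian. This follows from Mumford's construction being functorial in the base ring and compatible with the base change $\Oc_K\to\Oc_K/\pi$: the normalization factor $\lambda\in\Kbar^*$ corresponds exactly to the choice of integral trivialization $\epsilon:0^*\Lc\simeq\Oc_S$ on the semiabelian model, and after this normalization a theta constant has strictly positive valuation if and only if its reduction vanishes on the special fibre. By Theorem~\ref{numberofthetas} the count of such vanishing reduced theta constants is therefore one of $0,1,6,9$; the values $6$ and $9$ force the reduction of the Jacobian to be $E\times\Jac(C_2)$ (with product polarization) and $E_1\times E_2\times E_3$, respectively.

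Under the tree hypothesis on the dual graph, Torelli for nodal curves then forces the stable model of $C$ itself to be, in case $(i)$, a smooth elliptic component meeting a smooth genus-$2$ component at one node, and in case $(ii)$, three smooth elliptic components meeting in a tree (a chain of two nodes, or a star with a central rational component blown down). To conclude both directions I invoke the cluster-picture machinery for hyperelliptic curves of \cite{DDMM2}: the dual graphs just described correspond, after a suitable M\"obius change of coordinates placing $f$ in a convenient form, exactly to the cluster pictures cp1 and cp2. Conversely, starting from the cluster picture one writes down the stable model explicitly by blowing up along the deepest clusters and normalizing, which yields the ``moreover'' statement.

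The main obstacle I expect is the identification in the first step: verifying carefully that the valuation of a normalized algebraic theta constant coincides with the order of vanishing of its image on the special fibre of the stable (semi)abelian model, in particular that the global normalization factor $\lambda$ matches the integral trivialization on the model and not merely up to a residue-field constant. Once this reduction-of-theta statement is in place, everything else is a dictionary between (a) the product decompositions of the reduced p.p.a.v.\ given by Theorem~\ref{numberofthetas}, (b) the dual graph of the stable model of $C$ under the tree hypothesis, and (c) the cluster picture of the roots of $f$, with the last two linked by the standard hyperelliptic cluster-picture algorithm.
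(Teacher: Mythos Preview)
Your approach is sound in outline but takes a genuinely different route from the paper. You argue abstractly: reduce the theta constants to the special fibre, apply Theorem~\ref{numberofthetas} to identify the reduced Jacobian as a product, invoke a Torelli-type statement for compact-type curves to recover the dual graph of the stable curve, and then appeal to the DDMM dictionary to produce a model with the desired cluster picture. The paper instead works directly with the roots. For the hard implication (say, exactly $9$ positive-valuation thetas $\Rightarrow$ cluster picture cp2) it fixes a model $y^2=x(x-1)(x-\alpha_3)\cdots(x-\alpha_7)$, pins down \emph{which} nine characteristics have positive valuation (using the product structure from the proof of Theorem~\ref{numberofthetas}), chooses a compatible $\eta$-map, and then applies Takase's formula \cite{Takase}, which expresses each cross-ratio $(a_k-a_l)/(a_k-a_m)$ as the square of a quotient of four theta constants. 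From the nine valuations $v_1,\dots,v_9$ one reads off $v(a_i-a_j)$ for every pair; a short chain of inequalities and a few further Takase evaluations force $v_1=v_2=v_3=v_7=v_8$, $v_4=v_5$, etc., and hence the precise root-distance pattern giving the claimed cluster picture. The case of six positive-valuation thetas is analogous.

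What each approach buys: the paper's Takase computation is self-contained and constructive---it literally exhibits the model and the valuations $v(a_i-a_j)$, which is exactly what the ``moreover'' clause about computing the special fibre requires. Your route is conceptually cleaner and would generalize more readily, but it leans on the DDMM classification in the less commonly stated direction (every tree-like stable hyperelliptic model arises from \emph{some} equation with the asserted cluster picture) and on Torelli for compact-type nodal curves. The step you flag as the main obstacle---that the normalized integral algebraic theta constants honestly specialize to the theta constants of the reduced p.p.a.v.---is indeed the technical crux in both arguments; the paper handles it by invoking the argument of \cite[Thm.~1.6]{LLR}.
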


\begin{proof}
	We argue as in the proof of \cite[Thm. 1.6]{LLR}. If the cluster pictures are as indicated, then \cite[Table 9.1]{DDMM1} gives the reduction type and Theorem \ref{numberofthetas} gives the number of thetas with positive valuation. Alternatively, for seeing the reduction type, we can also directly compute the positive genus components of the stable model. In the first case:
	$$C:\,y^2=x(x-1)(x-\alpha_3)(x-\alpha_4)(x-\alpha_5)(x-\pi^s\alpha_6)(x-\pi^s\alpha_7),$$
	and the special fiber of the stable model is made of the genus $2$ curve $\bar{y}^2=\bar{x}(\bar{x}-1)(\bar{x}-\bar{\alpha}_3)(\bar{x}-\bar{\alpha}_4)(\bar{x}-\bar{\alpha}_5)$ union the elliptic curve $\bar{y}^2=\bar{x}(\bar{x}-\bar{\alpha_6})(\bar{x}-\bar{\alpha_7})$.
	
	In the second case:
	$$C:\,y^2=x(x-1)(x-\alpha_3)(x-\pi^s\alpha_4)(x-\pi^s\alpha_5)(x-1-\pi^{s'}\alpha_6)(x-1-\pi^{s'}\alpha_7),$$
	and the special fiber of the stable model is the union of the elliptic curves:
	$$
	E_1:\,\bar{y}^2=\bar{x}(\bar{x}-1)(\bar{x}-\bar{\alpha}_3),
	$$
	$$
	E_2:\,\bar{y}^2=\bar{x}(\bar{x}-\bar{\alpha}_4)(\bar{x}-\bar{\alpha}_5),
	$$
	$$
	E_3:\,\bar{y}^2=\bar{x}(\bar{x}-\bar{\alpha}_6)(\bar{x}-\bar{\alpha}_7).
	$$
	
	Now, for the direct implications, as in \cite[Thm. 1.6]{LLR}, this is the most difficult part. We are going to explicitly construct the models. Let us start with a model:
	$$C:\,y^2=x(x-1)(x-\alpha_3)(x-\alpha_4)(x-\alpha_5)(x-\alpha_6)(x-\alpha_7),$$
	where we have taken $\alpha_1=0$, $\alpha_2=1$ and $\alpha_8=\infty$.
	
	Assume that the jacobian of the hyperelliptic curve reduces modulo $\pi$ to the product of three elliptic curves with the product polarization. Proposition \ref{propCluster} implies that (after normalization) exactly $9$ of the even theta characteristic are $0$ modulo $\pi$. Then, as in the proof of Theorem \ref{numberofthetas} we may assume after renaming, that the $9$ theta constants with positive valuations are the ones with characteristic:
	
	$$
	\car{011}{011},\car{011}{111}, \car{111}{011}, \car{101}{101},\car{101}{111},\car{111}{101},\car{110}{110},\car{110}{111},\car{111}{110}.
	$$
	Let us call $v_1, ..., v_9$ the valuations of the theta constants with those theta characteristics.
	
	We consider now Takase formula in \cite[Thm.1.1]{Takase} to write down $a_i=-\frac{a_1-a_i}{a_1-a_2}$, recall that we have taken $a_1=0$ and $a_2=1$. In the same fashion, we will compute $1-a_i=\frac{a_2-a_i}{a_2-a_1}$, $a_3-a_i=\frac{a_3-a_i}{a_3-a_1}$ and so on.
	
	Let us take (following \cite[Def. 1.4.11]{poor}) the map $\eta:\,B\rightarrow\frac{1}{2}\mathbb{Z}^{2g}$ given by 
	\begin{multline*}
	\eta_1=\thetar\car{010}{110},	\eta_2=\thetar\car{001}{110},	\eta_3=\thetar\car{001}{111},	\eta_4=\thetar\car{000}{111},\\	\eta_5=\thetar\car{000}{000},	\eta_6=\thetar\car{100}{000},	\eta_7=\thetar\car{100}{100},	\eta_8=\thetar\car{010}{100}.
	\end{multline*}
This is just a permutation of the indices of the eta map given by Mumford, see \cite{Mumford}.
For this map we have $U_\eta=\{1,3,5,7\}$ and $\vartheta[U_\eta](Z)=\vartheta \car{111}{101} (Z)=0$, corresponding to the only zero even characteristic of a genus $3$ hyperelliptic curve, which is compatible with the previous choices we made. 
In particular, $v_6=\infty$.

Then, by Takase's formula and the previous computations, we have the non-negative valuations:
\begin{multline*}
\hspace{1cm} v(a_1-a_2)=0, \, v(a_1-a_3)=0,\, v(a_1-a_4)=0,\, v(a_1-a_5)=2(v_5-v_4),\,\\ v(a_1-a_6)=2(v_5-v_4),\, 
v(a_1-a_7)=2(v_5-v_4), \, v(a_2-a_3)=2v_3,\, v(a_2-a_4)=2v_3,\,\\ v(a_2-a_5)=2(v_3+v_8-v_4),\,
v(a_2-a_6)=2(v_1+v_8-v_4),\, v(a_2-a_7)=2(v_2+v_8-v_4),\,\\ v(a_3-a_4)=2v_3,\, v(a_3-a_5)=2(v_3+v_7-v_4),\,
v(a_3-a_6)=2(v_1+v_7-v_4),\,\\ v(a_3-a_7)=2(v_2+v_7-v_4),\, v(a_4-a_5)=2(v_3+v_9-v_4),\, \\
v(a_4-a_6)=2(v_1+v_9-v_4),\, v(a_4-a_7)=2(v_2+v_9-v_4),\, 
v(a_5-a_6)=2(v_9+v_4-v_5),\,\\
v(a_5-a_7)=2(v_9+v_4-v_5),\, v(a_6-a_7)=2(v_9+v_4-v_5).\hspace{1.5cm}
\end{multline*}
We detail here the computations for $-a_3=a_1-a_3$: with the notation as in \cite[Thm.1.1]{Takase}, $k=1$, $l=3$, $m=2$, $U_\eta=\{1,3,5,7\}$, $V=\{4,5\}$, $W=\{6,7\}$, so
$$
-a_3=\frac{a_1-a_3}{a_1-a_2}=\left(\frac{\vartheta [47]\vartheta [56]}{\vartheta [2347]\vartheta [2356]}\right)^2=\left(\frac{\vartheta \car{100}{011}\vartheta \car{100}{000}}{\vartheta \car{100}{010}\vartheta \car{100}{001}}\right)^2,
$$
and all these theta constants have zero valuation.

From the previous valuations being non-negative, we get the following inequalities:
\begin{equation*}v_5\geq v_4;\text{  }v_3+v_8, v_1+v_8, v_2+v_8\geq v_4;\text{  }v_3+v_7, v_1+v_7, v_2+v_7\geq v_4;\end{equation*}
%\begin{equation}\label{2}v_3+v_8, v_1+v_8, v_2+v_8\geq v_4\end{equation}
%\begin{equation}\label{3}v_3+v_7, v_1+v_7, v_2+v_7\geq v_4\end{equation}
\begin{equation*}v_3+v_9, v_1+v_9, v_2+v_9\geq v_4;\text{  }v_9+v_4\geq v_5.\end{equation*}
%\begin{equation}\label{5}v_9+v_4\geq v_5\end{equation}

Now from computing $\frac{a_2-a_i}{a_2-a_j}$ for $i,j\in\{5,6,7\}$ again with Takase's formula and by comparing with the previous valuations, we get $v_1=v_2=v_3$. By computing $\frac{a_6-a_4}{a_6-a_7}$, we get $v_5=v_1+v_7$ and by computing $\frac{a_6-a_2}{a_6-a_5}$, we get $v_5=v_1+v_8$. Hence, $v(a_1-a_5)=v(a_2-a_5)$ and $v(a_1-a_2)=0$ implies $v(a_1-a_5)=v(a_2-a_5)=0$, so $v_4=v_5$. Finally, by computing  $\frac{a_2-a_4}{a_5-a_7}=\frac{a_2-a_4}{a_5-a_4}\frac{a_5-a_4}{a_5-a_7}$, we get $v_7=v_1$.

If we make $v_1=r$, we have $v_1=v_2=v_3=v_7=v_8=r$, $v_4=v_5=2r$ and $v_6=\infty$. Hence, 
\begin{multline*}
\hspace{0.3cm}v(a_1-a_2)=0, \, v(a_1-a_3)=0,\, v(a_1-a_4)=0,\, v(a_1-a_5)=0,\, v(a_1-a_6)=0,\,\\
v(a_1-a_7)=0, \, v(a_2-a_3)=2r,\, v(a_2-a_4)=2r,\, v(a_2-a_5)=0,\,
v(a_2-a_6)=0,\,\\ v(a_2-a_7)=0,\, v(a_3-a_4)=2r,\, v(a_3-a_5)=0,\,
v(a_3-a_6)=0,\, v(a_3-a_7)=0,\,\\ v(a_4-a_5)=0,\, 
v(a_4-a_6)=0,\, v(a_4-a_7)=0,\, 
v(a_5-a_6)=2r,\,
v(a_5-a_7)=2r,\, v(a_6-a_7)=2r.
\end{multline*}

Which yields the cluster picture claimed in the Proposition. For the case of a product of an elliptic curve and a genus $2$ jacobian we proceed in a similar fashion.

%And for the discriminant of $y^2=x(x-1)(x-a_2)...(x-a_7)$, we get
%$$
%v(\Delta)=v(\prod_{i<j}(a_i-a_j)^2)=24r
%$$

\end{proof}

\subsection{A degree 20 invariant}
Let us define the following binary octic invariant of degree $20$:
$$
I_{20}:=\sum_{S_8}\frac{\prod_{i<j} (ij)^4}{(123)^4(45678)^2}=\sum_{S_8}(45678)^2(123,45678)^4
%=\sum(45678)^2=\\
%\sum(45)^2(46)^2(47)^2(48)^2(56)^2(57)^2(58)^2(67)^2(68)^2(78)^2.
%&\sum\frac{\prod (ij)^4}{(12)^4(23)^4(31)^4(45)^2(46)^2(47)^2(48)^2(56)^2(57)^2(58)^2(67)^2(68)^2(78)^2}.
$$
This is indeed an invariant.
\begin{lemma}Let $n$ be an even integer. An expression $I=\sum_{S_n}\prod_{i\neq j} (ij)^{e(i,j)}$ with $e(i,j)\in\mathbb{Z}$ satisfying $\sum_{i}e(i_0,i)+\sum_{i}e(i,i_0)= e\in\mathbb{Z}$ for all $i_0$ defines an invariant of degree $e$ for binary forms of degree $n$. 
\end{lemma}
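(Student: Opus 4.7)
The plan is to verify that $I$ satisfies the two defining properties of an invariant of degree $e$ for binary $n$-ics: the correct transformation law under the action of $\GL_2$ on $(x,z)$, and the fact that $I$ is a polynomial expression of degree $e$ in the coefficients of $f$. The hypothesis of the lemma will be used essentially through a simple double-counting argument.

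First, let $g\in\GL_2$ with $\delta=\det g$ act on $(x,z)^t$. Each linear form $\beta_i x-\alpha_i z$ is sent to another linear form in $(x,z)$ whose coefficients are linear combinations of $\alpha_i,\beta_i$. A direct expansion shows that each bracket transforms as $(ij)\mapsto \delta\,(ij)$. Consequently every summand $\prod_{i\neq j}(ij)^{e(i,j)}$ scales by $\delta^{E}$, where $E:=\sum_{i\neq j}e(i,j)$ is the total bracket multiplicity, and so does the full sum $I$.

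Next I would compute $E$ by double counting. Summing the identity $\sum_i e(i_0,i)+\sum_i e(i,i_0)=e$ over $i_0\in\{1,\dots,n\}$ counts each $e(i,j)$ exactly twice (once for $i_0=i$ and once for $i_0=j$), so $2E=ne$ and hence $E=ne/2$, which is an integer because $n$ is even. Thus $I$ transforms as $I(g\cdot f)=\delta^{ne/2}\,I(f)$, which is precisely the transformation law of an invariant of degree $e$ for binary forms of degree $n$. The $S_n$-symmetry of the sum further implies that $I$ is a symmetric function of the pairs $(\alpha_i,\beta_i)$, and the hypothesis makes each summand homogeneous of degree exactly $e$ in each individual pair $(\alpha_{i_0},\beta_{i_0})$. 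Combined with the classical fact that the coefficients of $f$ generate the $\Z$-algebra of $S_n$-symmetric functions of the pairs that are multihomogeneous of multidegree $(1,\dots,1)$, this identifies $I$ as a (rational) expression of degree $e$ in the coefficients of $f$.

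The main subtlety, and the step I would treat with the most care, is polynomiality when some $e(i,j)$ are negative: individual summands then have poles along $(ij)=0$, and one must check that the $S_n$-sum is actually regular. As the example $I_{20}$ preceding the lemma illustrates, the summation does produce the required cancellations and the sum admits a manifestly polynomial rewriting (there, as $\sum_{S_8}(45678)^2(123,45678)^4$); in general one either verifies such a rewriting directly or settles for the weaker statement that $I$ is a rational invariant of degree $e$. Once polynomiality is in place, the $\GL_2$-equivariance established above upgrades $I$ to a genuine polynomial invariant of degree $e$ for binary forms of degree $n$.
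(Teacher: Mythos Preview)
Your argument is correct and follows essentially the same route as the paper's: both check that each bracket $(ij)$ scales by a power of $\det M$ under $\GL_2$ and then use the hypothesis to obtain total exponent $E=ne/2$, hence weight $ne/2$ and degree $e$. You spell out the double-counting for $E=ne/2$ more explicitly than the paper does, and your careful discussion of polynomiality when some $e(i,j)$ are negative goes beyond the paper's proof, which does not address that point at all (it only records that the constancy of $\sum_i e(i_0,i)+\sum_i e(i,i_0)$ makes $I$ well defined under rescaling of each pair $(\alpha_i,\beta_i)$, and then computes the $\GL_2$-weight).
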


\begin{proof} Let $f(x,z)=\prod_{i=1}^n(\beta_i x-\alpha_iz)$ be a binary form of degree $n$, then $(ij)=\alpha_i\beta_j-\alpha_j\beta_i$. The condition of $\sum_{i}e(i_0,i)+\sum_{i}e(i,i_0)= e$ being constant is for having $I$ well defined independently of the choice of $\alpha_i$ and $\beta_i$ that can be done up to a multiple. 
	The action of $$M=\begin{pmatrix}
	a & b\\ c & d
	\end{pmatrix}\in\operatorname{GL}_2(\bar{K})$$ on $f$, is by $M(f)=f(M^{-1}(x,z))=\prod_{i=1}^n((d\beta_i+c\alpha_i)x-(b\beta_i+a\alpha_i )z)$, see \cite[Sec. 1]{LerRit}. Hence, the action $M\in\operatorname{GL}_2(\bar{K})$ on $(ij)$ is by sending it to $\operatorname{det}(M)^{-1}\cdot(ij)$ so $M(I(f))=\operatorname{det}(M)^{-\frac{ne}{2}}\cdot I(f)$ and $I$ defines a degree $n$ binary form invariant of weight $\frac{ne}{2}$ and hence of degree $e$.
\end{proof}

\begin{remark}
	Notice that if $e(i_0,j_0)+e(j_0,i_0)$ is odd for some $i_0,j_0\in\{1,\dots,n\}$ then the invariant $I$ is zero since for each term in the sum corresponding to $\sigma\in S_n$ we always have the opposite one corresponding to $\pi_{i_0,j_0}\circ\sigma\in S_n$ where $\pi_{i_0,j_0}$ is the transposition of indexes $i_0$ and $j_0$.
\end{remark}

Given a binary form $f=\beta\prod_{i=1}^8(x-\alpha_iz)$, if three of its roots are equal (let us say $\alpha_1=\alpha_2=\alpha_3$), then only $3!\cdot5!$ terms of $I_{20}(f)$ are not necessarily equal to zero: those that correspond to the terms $(45678)^2(123,45678)^4$ permuted by permutations leaving invariants the sets $\{1,2,3\}$ and $\{4,5,6,7,8\}$. Moreover, all of them are equal and equal to zero if and only if there exists another root equal to $\alpha_1$ or another pair of multiple roots.

Let $K$ be a discrete valuation field with valuation $v$ and characteristic of its residue field equal to $p\neq2$. Let $Sh=\{J_2,\dots,J_{10}\}$ be the set of Shioda invariants if $p\neq3,5,7$ or the corresponding set of invariants forming an HSOP for genus $3$ hyperelliptic curves if $p=3,5,7$, see the paragraph before Corollary $3.18$ in \cite{LLLR} for more details. We define the normalized valuation $v_{Sh}(I(C))$ for any other invariant $I$ of weight $w$ of a genus $3$ hyperelliptic curve $C:\,y^2=f(x)$ as in \cite[pp. 3-4]{LLLR}: $v_{Sh}(I(C))=v(I(C))/w-\operatorname{min}_{J_i\in Sh}\{v(J_{i}(C))/i\}$.

%take $\lambda$ in a finite extension of $K$ such that $(\lambda^2\cdot I_2(C),\dots,\lambda^{10}\cdot I_{10}(C),\lambda^w\cdot I(C))\in\mathcal{O}_K^{10}$, now define $v_{Sh}(I(C))=v(\lambda^w\cdot I(C))/w-min\{v(\lambda^iI_{i}(C))/i\}$.

\begin{theorem}\label{redtype}
	Let $C:\,y^2=f(x)$ be a hyeprelliptic genus $3$ curve defined over a discrete valuation ring $\mathcal{O}_K$ whose residue field $k$ has characteristic different from $2$ and such that the reduction of the jacobian of its stable model is still a p.p.a.v. of dimension $3$. Then,
	\begin{itemize}
		\item[(i)] $C$ has potentially good reduction if and only if $v_{Sh}(D)=0$.
		\item[(ii)] $C$ has geometrically bad reduction and the special fiber of its stable model over $\bar{k}$ is isomorphic to the product of one elliptic curve and a genus $2$ jacobian if and only if $v_{Sh}(D)>0$ and $v_{Sh}(I_{20})=0$.
		\item[(iii)] $C$ has geometrically bad reduction and the special fiber of its stable model over $\bar{k}$ is isomorphic to the product of $3$ elliptic curves if and only if $v_{Sh}(D)>0$ and $v_{Sh}(I_{20})>0$.
	\end{itemize}
\end{theorem}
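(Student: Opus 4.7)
The plan is to combine Theorem~\ref{numberofthetas} and Proposition~\ref{propCluster} with direct valuation computations on the explicit cluster models. Under the hypothesis that the stable Jacobian remains a p.p.a.v.\ of dimension~3, Theorem~\ref{numberofthetas} lists four possible reduction types; but since the hyperelliptic involution of $C_K$ extends to the stable reduction (and is the unique such involution on a smooth genus-$3$ curve), a smooth stable reduction of $C$ is again hyperelliptic, ruling out the zero-vanishing-theta case. This leaves exactly three stable types --- smooth hyperelliptic, $E\times J(C_2)$, and $E_1\times E_2\times E_3$ --- matching cases (i)--(iii) of the theorem, so it suffices to establish each forward implication.

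For case~(i), a potentially good reduction admits, after a finite base change, an integral model with $v(D)=0$. Since $\{J_2,\dots,J_{10}\}$ forms an HSOP (or the substitute HSOP in residue characteristic $p\in\{3,5,7\}$) and the reduced curve is smooth, not all $J_i$ can vanish mod~$\pi$: some $v(J_i)=0$, so $\min_i v(J_i)/i = 0$ and $v_{Sh}(D)=0$.

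For the bad cases, Proposition~\ref{propCluster} supplies integral models displaying either a single $3$-cluster (case (ii)) or two disjoint $3$-clusters (case (iii)). Plugging these into $D=\prod_{i<j}(\alpha_i-\alpha_j)^2$ immediately yields $v(D)=6s$ in (ii) and $v(D)=6(s+s')$ in (iii), since only root pairs inside a $3$-cluster contribute. For $I_{20}=\sum_{S_8}(45678)^2(123,45678)^4$, each summand is a perfect square, so $v(I_{20})$ coincides with the minimum summand valuation (no odd cancellation of leading terms). In case (ii), the permutation sending $\{1,2,3\}$ onto the cluster $\{0,\pi^s\alpha_6,\pi^s\alpha_7\}$ and $\{4,\dots,8\}$ onto the five singletons turns every factor $(ij)$ into a unit, producing a summand of valuation $0$; hence $v(I_{20})=0$. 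In case (iii), however, any $5$-element subset of the $8$ roots must meet one of the two $3$-clusters in at least $2$ points (only two singletons are available), so $(45678)^2$ always contributes positive valuation; the minimum is reached when $\sigma(\{1,2,3\})$ coincides with one $3$-cluster, giving $v(I_{20})=6\min(s,s')$. Combined with $\min_i v(J_i)/i = 0$, this yields $v_{Sh}(D)>0$ in both bad cases while $v_{Sh}(I_{20})=0$ in (ii) and $v_{Sh}(I_{20})>0$ in (iii). The three sign patterns $(v_{Sh}(D), v_{Sh}(I_{20})) \in \{(0,\ast), (+,0), (+,+)\}$ are mutually exclusive, so the converses follow by exhaustion.

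The main obstacle will be justifying the uniform statement $\min_i v(J_i)/i = 0$ across all three cases: intuitively each stable component of $C$ carries non-trivial invariant data (Igusa invariants for the genus-$2$ piece, $j$-invariants for the elliptic pieces) that survives as a non-zero residue in at least one $J_i$, so the HSOP cannot collectively degenerate; but a fully rigorous argument requires either explicit bounds on $v(J_i)$ for each cluster model (via the passage formula of Theorem~\ref{SinT} expressing each $J_i$ in terms of Tsuyumine's root-difference invariants) or an abstract appeal to the HSOP property of Shioda invariants over the residue field.
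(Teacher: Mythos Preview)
Your overall strategy matches the paper's: invoke Theorem~\ref{numberofthetas} and Proposition~\ref{propCluster} to reduce to the three possible stable types, then read off the sign of $v_{Sh}(D)$ and $v_{Sh}(I_{20})$ on the explicit cluster models and conclude by exhaustion. The paper simply cites \cite[Cor.~3.18]{LLLR} for part~(i), but your direct argument is in the same spirit.

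There is, however, one genuine gap. In case~(ii) you justify $v(I_{20})=0$ by saying ``each summand is a perfect square, so $v(I_{20})$ coincides with the minimum summand valuation.'' This is false over a discrete valuation ring whose residue field is not formally real: if $p\equiv 1\pmod 4$, then in $\Z_p$ one has $1^2+a^2\equiv 0\pmod p$ for any $a$ with $a^2\equiv -1$, so squares of unit valuation can cancel. The paper's argument (the paragraph on $I_{20}$ just before the theorem) is different and avoids this pitfall: one passes to the residue field, where $\bar f$ has exactly one triple root, and observes that the surviving terms of $I_{20}(\bar f)$ are not merely squares but are all \emph{equal} --- they arise from the $3!\cdot 5!$ permutations preserving the partition $\{\text{triple root}\}\sqcup\{\text{five simple roots}\}$ --- so they add up to $720\cdot T$ with $T$ a nonzero product of distinct root differences. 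That equality of terms, not their squareness, is what prevents cancellation.

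Your acknowledged obstacle, namely $\min_i v(J_i(f))/i=0$ for the cluster models, closes cleanly once you recall that the null cone for binary octics consists exactly of forms with a root of multiplicity $\geq 5$. Both reduced cluster forms have maximal root multiplicity~$3$, so they lie outside the null cone and some HSOP invariant is nonzero on $\bar f$; hence $\min_i v(J_i)/i=0$ as required. With that and the corrected ``all surviving terms are equal'' argument for $I_{20}$, your proof goes through and coincides with the paper's.
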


\begin{proof}
	The condition on the reduction of the jacobian is to use Theorem~\ref{numberofthetas} and Proposition~\ref{propCluster}. Part $(i)$ is \cite[Cor. 3.18]{LLLR}. If $v_{Sh}(D)>0$, Theorem~\ref{numberofthetas} tells us that there are only two cases to distinguish: having $6$ or $9$ theta constants with positive valuation. By Proposition~\ref{propCluster} we know that these cases are determined by having models with the cluster picture described there. Then previous discussion on the annulment of $I_{20}$ gives cases $(ii)$ and $(iii)$.
\end{proof}

Now, and again with the notation in \cite{LLLR}:

\begin{cor}\label{CorRed}
	Let $C$ be a hyeprelliptic genus $3$ curve with CM and let $\mathfrak{p}$ be a prime of bad reduction (i.e. $v_{Sh}(D)>0$) with $\mathfrak{p}\nmid 2$. Then the stable reduction of $C$ is made up of three elliptic curves if and only if $v_{Sh}(I_{20}(C))>0$. Otherwise, the stable reduction is the product of an elliptic curve and a principally polarized abelian surface.
\end{cor}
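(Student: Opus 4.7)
The plan is to reduce the statement to a direct application of Theorem~\ref{redtype}. Since that theorem carries the hypothesis that the stable reduction of the jacobian of the curve is still a principally polarized abelian variety of dimension $3$, the only real work is to verify that this hypothesis is automatic in the CM setting; once that is in place, the corollary will follow mechanically from parts (ii) and (iii) of the theorem.

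First, I would invoke the classical fact that a CM abelian variety has potentially good reduction at every prime. This is a direct consequence of Serre--Tate theory: the image of the inertia subgroup in the Galois representation on the $\ell$-adic Tate module is contained in the commutant of the CM order and is forced to be finite, so by the N\'eron--Ogg--Shafarevich criterion $\Jac(C)$ acquires good reduction after a finite base change of $K$. In particular the N\'eron model of $\Jac(C)$ at the unique prime above $\mathfrak{p}$ becomes an abelian scheme of dimension $3$, and its special fibre is still a principally polarized abelian variety of dimension $3$. This verifies exactly the hypothesis needed in Theorem~\ref{redtype}.

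Once the hypothesis is granted, the argument is immediate. By assumption $\mathfrak{p}$ is a prime of bad reduction with $\mathfrak{p}\nmid 2$, so $v_{Sh}(D(C))>0$ and case~(i) of Theorem~\ref{redtype} is ruled out. The remaining two cases are distinguished exactly by the sign of $v_{Sh}(I_{20}(C))$: case~(iii), where $v_{Sh}(I_{20}(C))>0$, gives the product of three elliptic curves, while case~(ii), where $v_{Sh}(I_{20}(C))=0$, gives the product of an elliptic curve and a genus~$2$ jacobian, which is in particular an elliptic curve times a principally polarized abelian surface as claimed.

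I do not expect a significant obstacle: the substantive work is contained in Theorem~\ref{redtype}, and the CM input is a standard reduction-theoretic statement. The only minor subtlety worth flagging is that in case~(ii) the abelian surface factor must itself be indecomposable (hence a genus~$2$ jacobian), for otherwise one would obtain a decomposition into three elliptic curves, which by Theorem~\ref{numberofthetas} would force $9$ rather than $6$ even theta constants to have positive normalised valuation, contradicting Proposition~\ref{propCluster} and the dichotomy in Theorem~\ref{redtype}.
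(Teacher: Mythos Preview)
Your proposal is correct and follows essentially the same route as the paper: verify the hypothesis of Theorem~\ref{redtype} in the CM setting, then read off the dichotomy from parts~(ii) and~(iii). The only difference is in how the hypothesis is justified: the paper cites \cite[Prop.~4.2]{BCLLMNO} to the effect that the stable reduction of a CM curve is tree-like (equivalently, that its jacobian has potentially good reduction as a p.p.a.v.), whereas you argue this directly via Serre--Tate and the N\'eron--Ogg--Shafarevich criterion. Both arguments establish exactly the same fact, so there is no substantive divergence; your final remark on indecomposability of the surface factor is a harmless extra observation, not needed for the corollary as stated.
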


\begin{proof}
	By \cite[Prop. 4.2]{BCLLMNO} the stable reduction of a CM curve is tree-like and we can apply Theorem ~\ref{redtype}.
\end{proof}

%\textcolor{red}{Can I get the j-invariant of one of the elliptic curves?}
%
%\textcolor{red}{Yes, compute Tsuyumine invariants for $y^2=x(x-\pi\beta_2)(x-\beta_3)(x-1)(x-\alpha_5)(x-\alpha_6)(x-\alpha_7)(x-\alpha_8)$ and later with $\alpha_5=1+\pi'\beta_5$ and $\alpha_6=1+\pi'\beta_6$. In the first case the ell. curve ramifies in $0,\beta_2^{-1},\beta_3^{-1},\infty$ and the genus $2$ curve in $0,1,\alpha_5,\alpha_6,\alpha_7,\alpha_8$ and in the second case the three elliptic curves ramify at $0,1,\alpha_7,\alpha_8$, $0,\beta_2,\beta_3,\infty$ and $0,\beta_5,\beta_6,\infty$}

\begin{lemma}\label{Lem:I20}
	The invariant $I_{20}$ can be written in terms of Shioda invariants as described in the Appendix $2$.
\end{lemma}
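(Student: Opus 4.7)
The plan is to mimic the strategy used for Theorem~\ref{TinS}. By Shioda's result, the Shioda invariants $J_2,J_3,\ldots,J_{10}$ generate the graded ring $S(2,8)$ of invariants of binary octics, so the degree-$20$ invariant $I_{20}$ must be a $\Q$-linear combination of monomials $\prod_{i=2}^{10}J_i^{n_i}$ with $\sum_{i=2}^{10}i\cdot n_i=20$. First, I would enumerate all such monomials. Second, using the five relations \eqref{eq:relations1}--\eqref{eq:relations5}, I would reduce this list to a linearly independent family (a normal-form basis) by eliminating the monomials containing the pairs $J_8^2,\, J_8J_9,\, J_8J_{10},\, J_9J_{10},\, J_{10}^2$, together with suitable iterations of these rules applied to higher powers.

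Next, I would pick a sufficiently large family of binary octics $f$ of the form $f(x,z)=\prod_{i=1}^{8}(x-\alpha_i z)$ with $\alpha_i\in\Z$ (or $\Q$) chosen at random, and compute for each one both the full vector of Shioda invariants (via the transvectant formulas) and the value $I_{20}(f)$ (as a sum over $S_8$, which, although it has $8!$ terms, is cheap since the roots are explicit integers so each summand is a product of small integer differences). This yields a linear system whose unknowns are the coefficients of $I_{20}$ in the chosen basis.

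Once enough sample points are accumulated to make the coefficient matrix of full rank, standard linear algebra over $\Q$ recovers the coefficients uniquely. Finally, I would verify the resulting identity on one or two independent test curves (including one whose roots lie in a number field, so as to check no accidental cancellation occurred over $\Q$).

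The main obstacle is bookkeeping rather than conceptual difficulty: the number of weighted-degree-$20$ monomials in $J_2,\ldots,J_{10}$ is sizeable, and the reduction to a normal-form basis via the five Shioda relations must be carried out carefully and consistently, so that the linear system presented for solution is of minimal dimension and the coefficients we find match exactly the ones that will appear in Appendix~$2$. As in the proof of Theorem~\ref{TinS}, this step is most safely handled by a computer algebra system such as Magma~\cite{magma}.
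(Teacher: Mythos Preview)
Your approach is correct and essentially the same as the paper's: the paper also reduces to the interpolation method of Theorem~\ref{TinS}, noting only that the resulting linear system has size $102\times102$. Your explicit reduction step via the five Shioda relations \eqref{eq:relations1}--\eqref{eq:relations5} is exactly what is needed to obtain a square system and to land on the specific normal-form expression printed in Appendix~2.
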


\begin{proof}
	We follow the same procedure than in the proof of Theorem \ref{TinS}. But this time the linear system we have to solve has size $102\times102$.
\end{proof}

\begin{example}
	Let us take the genus $3$ hyperelliptic curve $C:\,y^2=x^7+1786x^5+44441x^3+278179x$, it has CM by $\mathbb{Q}[x]/(x^6+13x^4+50x^2+49)$, see \cite[Section 6 - 3rd ex.]{Weng}. Its Shioda invariants are
	\begin{multline*}
		(J_2(C),J_3(C),J_4(C),J_5(C),J_6(C),J_7(C),J_8(C),J_9(C),J_{10}(C))=\\
		(-2^{-2}\cdot3^{2}\cdot7^{-1}\cdot19\cdot475549:0:2^{-9}\cdot7^{-4}\cdot19^2\cdot233\cdot23374404412631:0:\\2^{-14}\cdot3^2\cdot7^{-6}\cdot19^329\cdot1873\cdot12511\cdot4606367\cdot8109203:0:\\
		-2^{-17}\cdot3^{4}\cdot5^{-1}\cdot7^{-9}\cdot11\cdot19^{4}\cdot43\cdot47\cdot2381\cdot4583\cdot11903471\cdot171351716957:0:\\
		-2^{-22}\cdot3\cdot5^{-1}\cdot7^{-11}\cdot19^5\cdot23\cdot50178763\cdot170651941\cdot2743491486709463245193),
	\end{multline*}
	from where $D(C)=2^{18}\cdot7^{24}\cdot11^{12}\cdot19^7$ and we have geometrically bad reduction at $11$ since for example $11\nmid J_{2}(C)$. Because\footnote{Notice we are using Lemma~\ref{Lem:I20} to compute $I_{20}$ since obtaining its roots and using the definition for computing it would be very expensive.}$$I_{20}(C)=2^8\cdot3^3\cdot5^3\cdot7^{12}\cdot19^{10}\cdot131\cdot11867\cdot33730341419\cdot471894282846669530888306233351$$ and $11\nmid I_{20}(C)$, Corollary \ref{CorRed} implies that its stable reduction is a genus $2$ curve union an elliptic curve. Actually, for this particular model of the curve, the reduction modulo $11$ is $y^2=x^3(x-2)(x+2)(x-5)(x+5)$, which has as cluster picture the one described in Proposition~\ref{propCluster} (i). The genus $2$ curve in the special fiber of the stable model is $C_2:\,y^2=x(x-2)(x+2)(x-5)(x+5)$ and, after a suitable change of variable as described in the proof of Proposition~\ref{propCluster}, we also get the elliptic curve: $C_1:\,y^2=x^3+x$. So, $\bar{C}=C_1\cup C_2$ and $\bar{J(C)}=C_1\times J(C_2)$.
	
	We look now at $p=7$. In \cite[Prop. 4.3.3]{basson} the author gives an HSOP for hyperelliptic genus $3$ curves with weights $\{3,4,5,6,10,14\}$. We compute:
$$
	(\mathcal{J}_3(C),\mathcal{J}_4(C),\mathcal{J}_5(C),\mathcal{J}_6(C),\mathcal{J}_{10}(C),\mathcal{J}_{14}(C))\equiv(0, 3,0,2,1,3)\text{ mod }7,
$$
so $p=7$ is a prime of geometrically bad reduction and $v_{Sh}(I_{20}(C))>0$. Hence,  Corollary \ref{CorRed} implies that the special fiber of the stable model is the union of $3$ elliptic curves. Actually, by studying the reduction of $f(x)$ modulo $7$ and arguing as in Proposition~\ref{propCluster} we find that the $3$ elliptic curves have $j$-invariant equal to $1728\equiv6\text{ mod } 7$. 

\end{example}

\section*{Appendix 1}

Here we present as an example the Magma code used for computing Tsuyumine invariant $I_4$ is terms of Shioda invariants in Theorem \ref{TinS}.

\vspace{1cm}

\texttt{S8 := Sym(8);}

\texttt{a:=[1,2,3,4,5,6,7,8];}

\texttt{n:=2;}

\texttt{M:=ZeroMatrix(Rationals(),n,n);}

\texttt{v:=ZeroMatrix(Rationals(),1,n);}

\texttt{R<x>:=PolynomialRing(Rationals());}

\texttt{for i in [1..n] do}

\hspace{1cm}\texttt{a[8]:= 7+i;}

%\hspace{1cm}\texttt{P:=(a[1]-a[2])\^}\texttt{4*(a[3]-a[4])\^}\texttt{2*(a[4]-a[5])\^}\texttt{2*(a[5]-a[3])\^}\texttt{2*}
%
%\hspace{1cm}\texttt{(a[6]-a[7])\^}\texttt{2}\texttt{*(a[7]-a[8])\^}\texttt{2*(a[8]-a[6])\^}\texttt{2;}

\hspace{1cm}\texttt{L:=[(a[1\^}\texttt{g]-a[2\^}\texttt{g])\^}\texttt{4*(a[3\^}\texttt{g]-a[4\^}\texttt{g])\^}\texttt{2*(a[4\^}\texttt{g]-a[5\^}\texttt{g])\^}\texttt{2*}

\hspace{1cm}\texttt{(a[5\^}\texttt{g]-a[3\^}\texttt{g])\^}\texttt{2*(a[6\^}\texttt{g]-a[7\^}\texttt{g])\^}\texttt{2*(a[7\^}\texttt{g]-a[8\^}\texttt{g])\^}\texttt{2*}

\hspace{1cm}\texttt{(a[8\^}\texttt{g]-a[6\^}\texttt{g])\^}\texttt{2: g in S8];}

\hspace{1cm}\texttt{I4:=0;}

\hspace{1cm}\texttt{for j in [1..40320] do}

\hspace{1cm}\hspace{1cm}\texttt{I4:=I4+L[j];}

\hspace{1cm}\texttt{end for;}

\hspace{1cm}\texttt{v[1,i]:=I4;}

\hspace{1cm}\texttt{f:=(x-a[1])*(x-a[2])*(x-a[3])*(x-a[4])*(x-a[5])*(x-a[6])*}

\hspace{1cm}\texttt{(x-a[7])*(x-a[8]);}

\hspace{1cm}\texttt{J2:=ShiodaInvariants(f)[1];}

\hspace{1cm}\texttt{J4:=ShiodaInvariants(f)[3];}

\hspace{1cm}\texttt{C:=Matrix(Rationals(),n,1,[J2\^}\texttt{2,J4]);}

\hspace{1cm}\texttt{M:=InsertBlock(M,C,1,i);}

\texttt{end for;}

\texttt{// [a,b,c,d]*[[J2*J3(1), J2*J3(2)],[J5(1),J5(2)]]=[I5(1),I5(2)]}

\texttt{// x*M=v}

\texttt{Solution(M,v);}

\texttt{> [ -8601600 101154816]}

\texttt{Factorization(8601600);}

\texttt{Factorization(101154816);}

\texttt{>[ <2, 14>, <3, 1>, <5, 2>, <7, 1> ]}

\texttt{>[ <2, 15>, <3, 2>, <7, 3> ]}

\section{Apendix 2}

\begin{multline*}
I20:=-150779522189317809337532416/996451875\cdot J_2^{10} +
291770089409212849667964928/22143375\cdot J_2^8\cdot J_4 -\\
1695148271975113591309205504/66430125\cdot J_2^7\cdot J_3^2 -
179316648534237315000696832/492075\cdot J_2^7\cdot J_6 +
\\
	652065191519301124910743552/2460375\cdot J_2^6\cdot J_3\cdot J_5 +
	16154791364442376858763264/2460375\cdot J_2^6\cdot J_4^2 +\\
109442295325339074758180864/32805\cdot J_2^6\cdot J_8 -
257864348790628803521019904/492075\cdot J_2^5\cdot J_3^2\cdot J_4 +\\
2325823006493876640808960/6561\cdot J_2^5\cdot J_3\cdot J_7 +
323685458436375462012780544/19683\cdot J_2^5\cdot J_4\cdot J_6 -\\
36876419546721924702797824/91125\cdot J_2^5\cdot J_5^2 -
21490047395263280205266944/729\cdot J_2^5\cdot J_{10} -
\\
2273880863076868562712788992/4428675\cdot J_2^4\cdot J_3^4 +
548968706200708073444605952/295245\cdot J_2^4\cdot J_3^2\cdot J_6 -\\
415600353699987918363295744/54675\cdot J_2^4\cdot J_3\cdot J_4\cdot J_5 +
48744958929083435954733056/729\cdot J_2^4\cdot J_3\cdot J_9 -\\
1871598475608430889796632576/91125\cdot J_2^4\cdot J_4^3 -
1276593657056581174310207488/3645\cdot J_2^4\cdot J_4\cdot J_8 +\\
35303694973045127054884864/1215\cdot J_2^4\cdot J_5\cdot J_7 -
14140368315298612206632960/6561\cdot J_2^4\cdot J_6^2 +\\
1018515427675205805688225792/164025\cdot J_2^3\cdot J_3^3\cdot J_5 -
336610288267269639778598912/54675\cdot J_2^3\cdot J_3^2\cdot J_4^2 -\\
74969432406206948684333056/2187\cdot J_2^3\cdot J_3^2\cdot J_8 +
6649457326591889981308928/45\cdot J_2^3\cdot J_3\cdot J_4\cdot J_7 -\\
619030254390189753846726656/10935\cdot J_2^3\cdot J_3\cdot J_5\cdot J_6 -
3608451735885372384297877504/10935\cdot J_2^3\cdot J_4^2\cdot J_6 +\\
34814250412419307827888128/405\cdot J_2^3\cdot J_4\cdot J_5^2 -
925614958438175247473573888/1215\cdot J_2^3\cdot J_4\cdot J_{10} +\\
4710749291369895452213248/135\cdot J_2^3\cdot J_5\cdot J_9 -
494819283286030519883530240/729\cdot J_2^3\cdot J_6\cdot J_8 +\\
14537770989836507807744\cdot J_2^3\cdot J_7^2 - 431154367844645937545740288/98415\cdot J_2^2\cdot
J_3^4\cdot J_4\\ - 19613507262065954888089600/2187\cdot J_2^2\cdot J_3^3\cdot J_7 -
146707873700134638489436160/729\cdot J_2^2\cdot J_3^2\cdot J_4\cdot J_6 +\\
48180602381764409738395648/6075\cdot J_2^2\cdot J_3^2\cdot J_5^2 +
40705853229537397508669440/243\cdot J_2^2\cdot J_3^2\cdot J_{10} -\\
1387066956958698274029568/1215\cdot J_2^2\cdot J_3\cdot J_4^2\cdot J_5 -
7435068187959987393265664/81\cdot J_2^2\cdot J_3\cdot J_4\cdot J_9 +\\
36311983381615293444915200/81\cdot J_2^2\cdot J_3\cdot J_5\cdot J_8 +
347874930144850910025613312/729\cdot J_2^2\cdot J_3\cdot J_6\cdot J_7 +\\
1255008409772073192393801728/6075\cdot J_2^2\cdot J_4^4 +
86331981484958040096505856/27\cdot J_2^2\cdot J_4^2\cdot J_8 -\\
46372544865756833808121856/135\cdot J_2^2\cdot J_4\cdot J_5\cdot J_7 -
4872843022291903687675609088/10935\cdot J_2^2\cdot J_4\cdot J_6^2 +\\
273116722157213671548256256/405\cdot J_2^2\cdot J_5^2\cdot J_6 -
639504861274391454026825728/81\cdot J_2^2\cdot J_6\cdot J_{10} +\\
7611332353115259569963008/9\cdot J_2^2\cdot J_7\cdot J_9 -
45986422419474003848593408/32805\cdot J_2\cdot J_3^6 -\\
1362337960585246054864125952/19683\cdot J_2\cdot J_3^4\cdot J_6 -
31691914062729600254869504/1215\cdot J_2\cdot J_3^3\cdot J_4\cdot J_5 +\\
6356506589402467076669440/243\cdot J_2\cdot J_3^3\cdot J_9 +
566267050476982096994762752/6075\cdot J_2\cdot J_3^2\cdot J_4^3 +
\\
43732296667447222171860992/81\cdot J_2\cdot J_3^2\cdot J_4\cdot J_8 +
3904625658485018957185024/27\cdot J_2\cdot J_3^2\cdot J_5\cdot J_7 -
\\
1286370505156353475162931200/6561\cdot J_2\cdot J_3^2\cdot J_6^2 -
5208687002242783664668672/3\cdot J_2\cdot J_3\cdot J_4^2\cdot J_7 +\\
14323812823184402857066496/27\cdot J_2\cdot J_3\cdot J_4\cdot J_5\cdot J_6 -
23793150319901049836535808/225\cdot J_2\cdot J_3\cdot J_5^3 -
\end{multline*}

\begin{multline*}
58820106293726805219082240/27\cdot J_2\cdot J_3\cdot J_5\cdot J_{10} +
446780612957806169881051136/81\cdot J_2\cdot J_3\cdot J_6\cdot J_9 +\\
21094187031790256033628160/27\cdot J_2\cdot J_3\cdot J_7\cdot J_8 +
2810488743488952870562168832/6075\cdot J_2\cdot J_4^3\cdot J_6 -
\\
40612648665960118006841344/75\cdot J_2\cdot J_4^2\cdot J_5^2 +
1899511296268009901110853632/135\cdot J_2\cdot J_4^2\cdot J_{10} -\\
219267176513018334369808384/45\cdot J_2\cdot J_4\cdot J_5\cdot J_9 -
1396117748045736738251866112/81\cdot J_2\cdot J_4\cdot J_6\cdot J_8 +\\
2505606645422999207936000/9\cdot J_2\cdot J_4\cdot J_7^2 -
11479503724865816564334592/3\cdot J_2\cdot J_5^2\cdot J_8 +\\
118276542650689432441585664/27\cdot J_2\cdot J_5\cdot J_6\cdot J_7 -
5919291988926361071008088064/2187\cdot J_2\cdot J_6^3 +\\
22888858956213483842043904/3\cdot J_2\cdot J_9^2 -
7426491921950305668825088/1215\cdot J_3^5\cdot J_5 +\\
43658225372385848292540416/10935\cdot J_3^4\cdot J_4^2 -
3676086792284031502254080/81\cdot J_3^4\cdot J_8 +
\\
13964097972615215174385664/243\cdot J_3^3\cdot J_4\cdot J_7 +
4729944765365651511443456/27\cdot J_3^3\cdot J_5\cdot J_6 -\\
326349873401360496115843072/729\cdot J_3^2\cdot J_4^2\cdot J_6 +
8284834323872744942338048/135\cdot J_3^2\cdot J_4\cdot J_5^2 +\\
38610817879262287831760896/81\cdot J_3^2\cdot J_4\cdot J_{10} +
36142134430859893179154432/27\cdot J_3^2\cdot J_5\cdot J_9 -\\
319373012455443877561630720/243\cdot J_3^2\cdot J_6\cdot J_8 +
5094152176839330837299200/27\cdot J_3^2\cdot J_7^2 -
\\
202070939385898091820351488/675\cdot J_3\cdot J_4^3\cdot J_5 -
263370259894608708776230912/135\cdot J_3\cdot J_4^2\cdot J_9 -\\
45560138686768979218792448/9\cdot J_3\cdot J_4\cdot J_5\cdot J_8 +
176182578405149234752913408/81\cdot J_3\cdot J_4\cdot J_6\cdot J_7 +\\
6425958693517651685146624/3\cdot J_3\cdot J_5^2\cdot J_7 -
5585246833065391336456192/9\cdot J_3\cdot J_5\cdot J_6^2 -\\
8522447483696509941186560/9\cdot J_3\cdot J_7\cdot J_{10} + 143846738599820378507313152/625\cdot J_4^5\\
+ 450198288326268137643180032/45\cdot J_4^3\cdot J_8 -
48017713874938717785817088/15\cdot J_4^2\cdot J_5\cdot J_7 -\\
179304344085561207987109888/405\cdot J_4^2\cdot J_6^2 +
35010518636149680570892288/45\cdot J_4\cdot J_5^2\cdot J_6 -\\
510546782523321128191000576/27\cdot J_4\cdot J_6\cdot J_{10} +
9321426935293057748172800/3\cdot J_4\cdot J_7\cdot J_9\\
 + 20553746392321192334721024/25\cdot J_5^4 
-
8309386296604097192656896\cdot J_5^2\cdot J_{10} +\\ 3200265993877873120772096/3\cdot J_5\cdot J_6\cdot J_9 -
2979527538245459530219520\cdot J_5\cdot J_7\cdot J_8 + \\176058111538199840570736640/81\cdot J_6^2\cdot J_8
+ 16305703093807098101760\cdot J_6\cdot J_7^2;
\end{multline*}

\bibliographystyle{alphaabbr}
\bibliography{sample} 

%\bibliographystyle{alphaabbr}
%
%\bibliography{synthbib}

\end{document}